\documentclass[11pt,reqno]{amsart}
\usepackage{graphicx}
\usepackage[caption = false]{subfig}
\usepackage{amsmath,amsfonts,amssymb,comment}
\usepackage{mathrsfs}
\usepackage{siunitx}
\usepackage{float}
\usepackage{enumerate}
\usepackage{enumitem}
\usepackage{mathtools}
\usepackage{amsthm}
\usepackage{varioref}
\usepackage[english]{babel}
\usepackage[utf8x]{inputenc}\usepackage[english]{babel}
\usepackage{soul}
\usepackage{enumitem}
\usepackage{xcolor}
\usepackage{hyperref}
\usepackage{cite}
\usepackage{relsize} 
\usepackage{titlesec}
\usepackage{wrapfig}
\usepackage[margin=1in]{geometry}
\titleformat{\section}[block]
  {\normalfont\Large\bfseries}{\thesection}{1em}{}
\titlespacing*{\section}
  {0pt}{10pt plus 4pt minus 2pt}{7pt plus 4pt minus 2pt}
\titleformat{\subsection}[block]
  {\normalfont\large\bfseries}{\thesubsection}{1em}{}
\titlespacing*{\subsection}
  {0pt}{8pt plus 3pt minus 2pt}{5pt plus 2pt minus 1pt}
\newtheorem{theorem}{Theorem}[section]

\newtheorem{lemma}[theorem]{Lemma}

\theoremstyle{definition}

\theoremstyle{remark}

\numberwithin{equation}{section}

\begin{document}
	\title[On Coefficient problems]{Coefficient problems of Starlike Functions Related to a Balloon-Shaped Domain }
	    \author[S. S. Kumar]{S. Sivaprasad Kumar}
	\address{Department of Applied Mathematics, Delhi Technological University, Bawana Road, Delhi-110042, INDIA}
	\email{spkumar@dce.ac.in}
        \author[A. Tripathi]{A. Tripathi}
	\address{Department of Applied Mathematics, Delhi Technological University, Bawana Road, Delhi-110042, INDIA}
	\email{ms.aryatripathi\_25phdam03@dtu.ac.in}
	\subjclass[2020]{30C45 · 30C50 }
	\keywords{ Univalent functions · Starlike functions · Inverse Function · Balloon-Shaped domain · Hankel determinant · Toeplitz Determinant · Initial coefficient · Logarithmic coefficient }
\begin{abstract}
Recent advances in image and signal processing have drawn on geometric function theory, particularly coefficient estimate problems. Motivated by their significance, we introduce a class of starlike functions related to a balloon-shaped domain  
\[
\mathcal{S}^*_{\mathcal{B}}= \left\{ f \in \mathcal{A} : \frac{z f'(z)}{f(z)} \prec \frac{1}{1-\log(1+z)} := B(z); \; z \in \mathbb{D} \right\},
\]  
where $B(z)$ maps the unit disk $\mathbb{D}$ onto a balloon-shaped domain. This work establishes bounds for the second order Hankel determinants and second order Toeplitz determinants involving the initial coefficients, the logarithmic coefficients and the logarithmic coefficients of the inverse function for $f \in \mathcal{S}^*_{\mathcal{B}}$.
\end{abstract}

\maketitle

\section{\hspace{5pt} Introduction}
\label{intro}
Consider the class $\mathcal{A}$ consisting of analytic functions $f$ defined in $\mathbb{D} := \{z \in \mathbb{C} : |z| < 1\}$, normalized by these conditions $f(0)=0$ and $f'(0)=1$. For $f\in \mathcal{A}$:
\begin{equation}
f(z) = z+a_2z^2+a_3z^3+\dots =z + \sum_{n=2}^{\infty} a_n z^n. \label{eq:1.1}
\end{equation}
Let $\mathcal{S}$ be a subclass of $\mathcal{A}$ consisting of analytic functions that are univalent in $\mathbb{D}$. The subclass $\mathcal{S}^{\ast}$ of $\mathcal{S}$ consisting of starlike functions and function $f\in \mathcal{S}^{\ast}$ is defined as:
\[
\mathcal{S}^{\ast} = \left\{ f \in \mathcal{A} : \Re \left(\dfrac{z f'(z)}{f(z)}\right) > 0; \quad z \in \mathbb{D} \right\}.
\]
For analytic functions $f$ and $g$, we say that $f$ is subordinate to $g$, written $f \prec g$ \cite{lowner1923}, if $f(z)=g(w(z))$ for some Schwarz function $w$ with $w(0)=0$ and $|w(z)|<1$. Ma and Minda \cite{MaMinda1994} introduced the class $\mathcal{S}^{\ast}(\varphi)$, defined by:
\[
\mathcal{S}^{\ast}(\varphi)=\left\{f\in\mathcal{A}:\frac{zf'(z)}{f(z)}\prec\varphi(z)\right\},
\]
where $\varphi$ is analytic, univalent, $\Re\varphi(z)>0$, and maps $\mathbb{D}$ onto a starlike domain symmetric about the real axis with $\varphi(0)=1$ and $\varphi'(0)>0$. Different choices of $\varphi$ yield well-known subclasses of $\mathcal{S}^{\ast}$, see
Table~\ref{tab:marks}.
\begin{table}[ht]
\centering
\begin{tabular}{|c|c|c|}
\hline
\textbf{Class} & \textbf{$\phi(z)$} & \textbf{References} \\ \hline
$\mathcal{S}^{\ast}[A,B]$ & $\dfrac{1+Az}{1+Bz}; \; -1\le B<A\le1$ & Janowski \cite{Janowski1970} \\ \hline
$\mathcal{S}^{\ast}_{\rho}$ & $1+\sinh^{-1}(z)$ & Arora \emph{et al.} \cite{AroraKumar2022} \\ \hline
$\mathcal{S}^{\ast}_e$ & $e^z$ & Mendiratta \emph{et al.}\cite{Mendiratta2015}\\ \hline
$\mathcal{SL}$ & $\sqrt{1+z}$ &  Stankiewicz \emph{et al.}\cite{Sokol1996} \\ \hline
$\mathcal{S}^{\ast}_{\mathfrak{B}}$ & $\sqrt{1+\tanh{z}}$ & Yadav \emph{et al.} \cite{KumarYadav2025}\\ \hline
$\mathcal{S}^{\ast}_q$ & $z+\sqrt{1+z^2}$ & Raina \emph{et al.} \cite{Raina2015} \label{tab:marks}  \\ \hline
\end{tabular}
\caption{Subclasses of starlike functions corresponding to various $\phi(z)$}
\end{table}

In this study, we investigate a class of starlike functions that are related to a balloon-shaped domain $B(\mathbb{D})$, illustrated in Figure.\ref{fig:phi}. We define this class as:
\[\mathcal{S}^{\ast}_{B}= \left\{ f \in \mathcal{A} : \frac{z f'(z)}{f(z)} \prec  B(z) \right\}.\] 

\begin{wrapfigure}{r}{0.4\textwidth}
\centering
\includegraphics[width=0.35\textwidth]{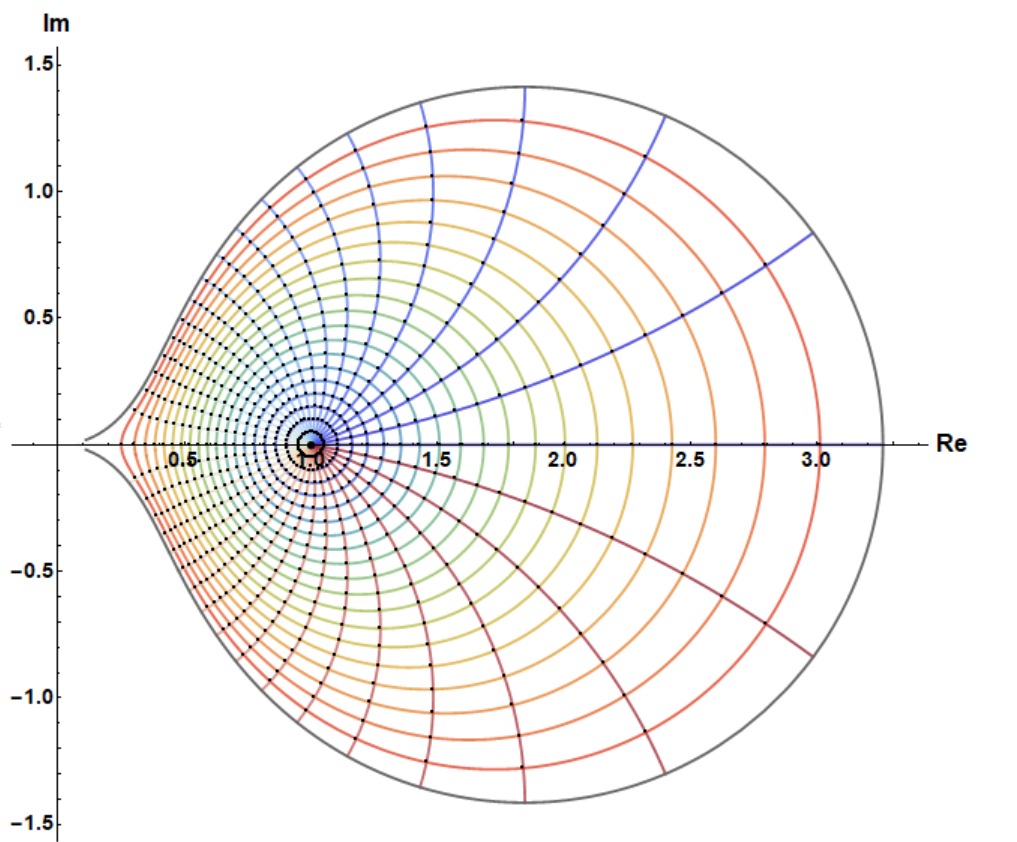}
\caption{\centering $B(\mathbb{D})$, a balloon-shaped domain, $B(z)=\dfrac{1}{1-\log(1+z)}$.}\label{fig:phi}.
\end{wrapfigure}

The domain $B(\mathbb{D})$ is characterized by $B(\mathbb{D})=\left\{w\in\mathbb{C}\setminus\{0\}: \big|\exp\!\big(1-\tfrac{1}{w}\big)-1\big|<1\right\}$, with boundary $\partial B(\mathbb{D})$ given by $|\exp(1-1/w)-1|=1$. Parametrizing $z=e^{i\theta}$ with $\theta\in(-\pi,\pi)$ gives $w(\theta)=[1-\log(2\cos(\theta/2))-i\theta/2]^{-1}$, revealing symmetry about the real axis, a rightmost tip at $w(0)=1/(1-\log2)\approx3.2589$, a convex main body extending leftward to approximately $(-0.181,\pm0.678)$, and a cusp at the origin as $\theta\to\pm\pi$ with $\arg(w)\to\mp\pi/2$ resembling a balloon with tied knot. Near the origin the boundary approximates $(x-1/2)^2+y^2=1/4$. The domain is starlike with respect to $w=1$ and univalent, see Figure~\ref{fig:phi}.\\

The functions in $\mathcal{S}^{\ast}_{B}$ can be represented through an integral formula as follows: 
\begin{equation}
    f(z)=z\exp \int_{0}^{z}\dfrac{\phi(t)-1}{t} dt. \label{extremalfunction}
\end{equation}
The functions $f_{1}(z)$, $f_{2}(z)$, and $f_{3}(z)$ serve as extremal functions for the family $\mathcal{S}^{\ast}_{B}$, obtained by $\phi(t)=B(t),\;\phi(t)=B(t^2)$ and $\phi(t)=B(it)$ in \eqref{extremalfunction}, respectively. These functions are given by:
\begin{eqnarray}
    f_{1}(z) = z \exp\left(\int_{0}^{z} \dfrac{\log(1+t)}{t(1-\log(1+t))} \, dt\right) &=&z +z^2+ \dfrac{3}{4} z^3 + \dfrac{19}{36}z^4+\dfrac{101}{288} z^5 + \ldots \;, \label{1.45}\\
    f_{2}(z) = z \exp\left(\int_{0}^{z} \dfrac{\log(1+t^2)}{t(1-\log(1+t^2))} \, dt\right) &=&z + \dfrac{1}{2} z^3 + \dfrac{1}{4} z^5 + \dfrac{5}{36}z^{7}\ldots \;, \label{1.466}\\
    f_{3}(z) =z \exp\left(\int_{0}^{z} \dfrac{\log(1+i t)}{t(1-\log(1+i t))} \, dt\right) &=& z +i z^2- \dfrac{3}{4} z^3 - \dfrac{19}{36} i z^4+\dfrac{101}{288}z^5 + \ldots \; \label{1.477}
\end{eqnarray}
For $f\in\mathcal{S}$, the logarithmic coefficients $\gamma_n(f)$ are defined as:
\begin{equation}
F_f(z) = \log\left( \dfrac{f(z)}{z}\right) = 2 \sum_{n=1}^{\infty} \gamma_n(f) z^n, \quad \text{where} \; z \in \mathbb{D} . \nonumber
\end{equation}
We denote $\gamma_n(f)$ by $\gamma_n$. For $f \in \mathcal{S}$, the logarithmic coefficients are given by:
\begin{eqnarray} 
    \gamma_1=\dfrac{1}{2} a_2, \quad \gamma_2 = \dfrac{1}{2} \left(a_3 - \dfrac{1}{2} a_2^2 \right), \quad \gamma_3 = \dfrac{1}{2} \left( a_4 - a_2 a_3 + \dfrac{1}{3} a_2^3 \right). \label{log coeff}
\end{eqnarray}
By the Koebe $1/4$-theorem, the inverse function of $f \in \mathcal{S}$ can be defined as $F_{f^{-1}} \in \mathcal{A}$ in a neighbourhood of the origin, given by:
\begin{equation}
F_{f^{-1}}(w):=\log\left(\dfrac{F(w)}{w}\right) = 2\sum_{n=1}^{\infty} \Gamma_n w^n, \quad |w| < \dfrac{1}{4}.\nonumber
\end{equation}
Here the logarithmic coefficients of the inverse function of $f \in \mathcal{S}$ are given as:
\begin{eqnarray}
\Gamma_1 = -\frac{1}{2}a_2,\quad \Gamma_2 = -\frac{1}{2}\left(a_3-\frac{3}{2}a_2^2\right),\quad \Gamma_3=-\frac{1}{2}\left(a_4-4a_2a_3+\frac{10}{3}a_{2}^{3}\right). \label{loginvcoef123}
\end{eqnarray}
The Bieberbach conjecture \cite{Goodman1983} motivated coefficient problems in univalent function theory. Pommerenke \cite{Pommerenke1967} introduced the $q$th Hankel determinant. For $f\in\mathcal{A}$, the Hankel determinant $\mathcal{H}_{q,n}(f)$, is defined as follows:
\begin{equation}
\mathcal{H}_{q,n}(f) = 
\begin{vmatrix}
a_n & a_{n+1} & \cdots & a_{n+q-1} \\
a_{n+1} & a_{n+2} & \cdots & a_{n+q} \\
\vdots & \vdots & \ddots & \vdots \\
a_{n+q-1} & a_{n+q} & \cdots & a_{n+2(q-1)}
\end{vmatrix},\quad q,n\in\mathbb{N}.\label{hqn}
\end{equation}
Sharp bounds for Hankel determinants in various subclasses of $\mathcal{S}$ have been extensively studied (see \cite{VermaKumar2025,KumarKamaljeet2021,Raza2025,KumarVerma2023, KumarVerma2025,Janteng2007}). From \eqref{hqn}, second order Hankel determinant
\begin{equation}
\mathcal{H}_{2,1}(f) = (a_3-a_2^2),\quad 
\mathcal{H}_{2,2}(f)= (a_2a_4-a_3^2).\label{h21 and h22}
\end{equation}
Ponnusamy \emph{et al.}~\cite{Ponnusamy2020} obtained sharp bounds for the logarithmic coefficients of univalent functions and their inverses. For $f \in \mathcal{A}$, the $q^{th}$ Hankel determinant $\mathcal{H}_{q,n}(F_f)$ is defined by these coefficients as follows:
\begin{equation}
    \mathcal{H}_{q,n}(F_{f}/2) = \begin{vmatrix}
\gamma_n & \gamma_{n+1} & \dots & \gamma_{n+q-1}\\
\gamma_{n+1} & \gamma_{n+2} & \dots & \gamma_{n+q} \\
\vdots & \vdots & \ddots & \vdots\\
\gamma_{n+q-1} & \gamma_{n+q} & \dots & \gamma_{n+2(q-1)}\\
\end{vmatrix}. \label{1.10}
\end{equation}
The idea of
studying Hankel matrices, whose entries are logarithmic coefficients of
analytic functions, was initiated by B. Kowalczyk and A. Lecko \cite{Kowalczykandlecko2022, Kowalczyk2022, Kowalczyk2023}, for further developments see \cite{MundaliaKumar2023,Kumar2025,AlluShaji2025,Mandal2025}. For $f \in \mathcal{A}$, the $q^{th}$ Hankel determinant $\mathcal{H}_{q,n}(F_f)$, defined by the logarithmic coefficients of $f^{-1}$, is given by:
\begin{equation}
\mathcal{H}_{q,n}(F_{f^{-1}}/2) = \begin{vmatrix}
\Gamma_n & \Gamma_{n+1} & \dots & \Gamma_{n+q-1}\\
\Gamma_{n+1} & \Gamma_{n+2} & \dots & \Gamma_{n+q} \\
\vdots & \vdots & \ddots & \vdots\\
\Gamma_{n+q-1} & \Gamma_{n+q} & \dots & \Gamma_{n+2(q-1)}\\
\end{vmatrix}. \label{1.13}
\end{equation}

The concept of studying Hankel matrices, whose entries are logarithmic coefficients of inverse analytic functions, was firstly studied in \cite{Eker2023, Lecko2024, Lecko2025}. For $f\in \mathcal{A}$, the Toeplitz determinant is given by:
\begin{equation}
\mathcal{T}_{q,n}(f) = 
\begin{vmatrix}
a_n & a_{n+1} & \cdots & a_{n+q-1} \\
a_{n+1} & a_n & \cdots & a_{n+q-2} \\
\vdots & \vdots & \ddots & \vdots \\
a_{n+q-1} & a_{n+q-2} & \cdots & a_n
\end{vmatrix},\quad q,n \in \mathbb{N}.\label{toep}
\end{equation}

Bounds for the Toeplitz determinant in the class $\mathcal{S}^{\ast}$ and $\mathcal{C}$ were studied by Ali \emph{et al.} \cite{Ali2018}, with further extensions to subclasses of $\mathcal{S}$ in \cite{Ahuja2021, GiriKumar2023,GiriKumar2024, Lecko2020, Obradovic2021, Toeplitz1907}. Setting $a_1=1$ and $q=2$ in \eqref{toep}, which yields the second-order Toeplitz determinant for $n=1,2$.
\begin{equation}
\mathcal{T}_{2,1}(f)=(1-a_2^2),\quad
\mathcal{T}_{2,2}(f)= (a_2^2-a_3^2), \quad
\mathcal{T}_{2,3}(f)=(a_3^2-a_4^2).  \label{t22}
\end{equation}
For $f \in \mathcal{S}$, the Toeplitz determinant corresponding to the logarithmic coefficients as follows:
\begin{equation}
   \mathcal{T}_{q,n}(F_{f}/2) = \begin{vmatrix}
\gamma_n & \gamma_{n+1} & \dots & \gamma_{n+q-1}\\
\gamma_{n+1} & \gamma_{n} & \dots & \gamma_{n+q-2} \\
\vdots & \vdots & \ddots & \vdots\\
\gamma_{n+q-1} & \gamma_{n+q-2} & \dots & \gamma_{n}\\
\end{vmatrix}. \label{1.12}
\end{equation} 
 The Toeplitz determinant $\mathcal{T}_{q,n}(F_{f^{-1}}/2)$, where the entries are the logarithmic coefficients of inverse of $f\in \mathcal{S}$ and are given as:
\begin{equation}
\mathcal{T}_{q,n}(F_{f^{-1}}/2) = \begin{vmatrix}
\Gamma_n & \Gamma_{n+1} & \dots & \Gamma_{n+q-1}\\
\Gamma_{n+1} & \Gamma_{n} & \dots & \Gamma_{n+q-2} \\
\vdots & \vdots & \ddots & \vdots\\
\Gamma_{n+q-1} & \Gamma_{n+q-2} & \dots & \Gamma_{n}\\
\end{vmatrix}. \label{1.14}
\end{equation}
Recent research in geometric function theory examines Hankel and Toeplitz determinants for coefficient bounds of analytic and starlike functions \cite{Kanwal2025}. Although applications remain limited, they show potential in image processing, signal analysis, and mathematical physics, including contrast enhancement \cite{Chen2019}, texture analysis \cite{Priya2023}, and fractional-order heat equations \cite{Ibrahim2022}.
This work introduces a balloon-shaped starlike class and derives sharp bounds for coefficients, the Fekete-Szeg\"o functional, and second-order Hankel and Toeplitz determinants.

\section{\hspace{5pt} Preliminary results}\label{3}
Let $\mathcal{P}$ be the class consisting of functions with positive real part in $\mathbb{D}$, and this subclass is known as Carath\'eodory class. The Taylor series expansion of $f \in \mathcal{P}$ is defined as:
\begin{equation}
    p(z) =1+p_1z+p_2z^2+\dots= 1 + \sum_{n=1}^{\infty}p_n z^n.\label{pp}
\end{equation}
The Carath\'eodory class $\mathcal{P}$ and its associated coefficient bounds play a crucial role in establishing the sharp bounds for the Hankel determinant. This section provides key lemmas that form the foundation for the main results presented in this section.
\begin{lemma}\label{lem:1}
\cite{cho_kowalczyk_lecko_smiarowska2020}: If $p \in \mathcal{P}$ is of the form \eqref{pp}, then 
\begin{eqnarray}
p_{1}&=&2\zeta_{1}, \label{l1}\\
p_{2}&=&2\zeta_{1}^{2}+2(1-\zeta_{1}^{2})\zeta_{2}, \label{l2}\\
p_{3}&=&2\zeta_{1}^{3}+4(1-\zeta_{1}^{2})\zeta_{1}\zeta_{2}-2(1-\zeta_{1}^{2})\zeta_{1}\zeta_{2}^{2}+2(1-\zeta_{1}^{2})(1-|\zeta_{2}|^{2})\zeta_{3},  \label{l3}
\end{eqnarray}
for some $\zeta_{1},\;\zeta_{2},\;\zeta_{3}$ $\in \mathbb{\overline{D}}$.\\ 
For $\zeta_{1}\in \mathbb{T}:=\{z \in \mathbb{C}\;;\;|z|=1\}$, there is a unique function $p \in \mathcal{P}$ with $p_1$ as in \eqref{l1}, namely,
\begin{equation}
    p(z)=\dfrac{1+\zeta_{1}z}{1-\zeta_{1}z}, \quad z\in \mathbb{D} \label{firstp(z)}
\end{equation}
For $\zeta_1 \in \mathbb{D}$ and $\zeta_2 \in \mathbb{T}$, there is a uniquee function $p\in \mathcal{P}$ with $p_1$ and $p_2$ as in \eqref{l1} and \eqref{l2}, namely,
\begin{equation}
    p(z)=\dfrac{1+(\overline{\zeta_1}\;\zeta_2+\zeta_1)z+\zeta_2z^2}{1+(\overline{\zeta_1}\;\zeta_2-\zeta_1)z-\zeta_2z^2}, \quad z \in \mathbb{D}. \label{p(z)}
\end{equation}

\end{lemma}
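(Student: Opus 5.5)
The plan is to reduce everything to the Schur algorithm applied to the Schwarz function attached to $p$. Since $p\in\mathcal{P}$, the function
\[
\omega(z)=\frac{p(z)-1}{p(z)+1}=c_1z+c_2z^2+c_3z^3+\cdots
\]
is a Schwarz function. Equivalently, $\omega_0(z)=\omega(z)/z$ is analytic in $\mathbb{D}$ with $|\omega_0|\le 1$. We set $\zeta_1=\omega_0(0)=p_1/2\in\overline{\mathbb{D}}$. When $|\zeta_1|<1$, we apply one Schur step,
\[
\omega_1(z)=\frac{1}{z}\cdot\frac{\omega_0(z)-\zeta_1}{1-\overline{\zeta_1}\omega_0(z)},
\]
which by the Schwarz--Pick lemma is again a self-map of $\overline{\mathbb{D}}$ (possibly a unimodular constant). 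We put $\zeta_2=\omega_1(0)$, and similarly $\zeta_3=\omega_2(0)$ with $\omega_2$ obtained from $\omega_1$ by the same step. If some $|\zeta_k|=1$, the later $\zeta$'s are chosen arbitrarily in $\overline{\mathbb{D}}$. This is consistent with the formulas because the factors $(1-\zeta_1^2)$ and $(1-|\zeta_2|^2)$ vanish there.

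\textbf{Step 1: express the $c_n$ through the $\zeta_k$.} Invert the Schur steps:
\[
\omega_0=\frac{\zeta_1+z\omega_1}{1+\overline{\zeta_1}z\omega_1},\qquad
\omega_1=\frac{\zeta_2+z\omega_2}{1+\overline{\zeta_2}z\omega_2}.
\]
Expanding to second order gives
\[
c_1=\zeta_1,\quad c_2=(1-|\zeta_1|^2)\zeta_2,\quad c_3=(1-|\zeta_1|^2)\bigl((1-|\zeta_2|^2)\zeta_3-\overline{\zeta_1}\zeta_2^2\bigr).
\]

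\textbf{Step 2: express the $p_n$ through the $c_n$.} From $p=(1+\omega)/(1-\omega)=1+2\omega+2\omega^2+\cdots$ we get
\[
p_1=2c_1,\quad p_2=2c_2+2c_1^2,\quad p_3=2c_3+4c_1c_2+2c_1^3.
\]
Substituting the expressions from Step 1 yields \eqref{l1}--\eqref{l3}, except that $|\zeta_1|^2$ appears in place of $\zeta_1^2$.

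\textbf{The main obstacle} is this discrepancy: the cited statement writes $(1-\zeta_1^2)$, which is correct only for real $\zeta_1$. In the source, namely Cho--Kowalczyk--Lecko--\'Smiarowska, the formulas are derived with $\zeta_1\in[0,1]$ after a rotation normalization, or else with $|\zeta_1|^2$. I would therefore either note the $|\zeta_1|^2$ version or first rotate: replace $p(z)$ by $p(e^{i\theta}z)$ so that $p_1\ge 0$. All the functionals the paper later studies are rotation-invariant in modulus, so this normalization costs nothing. After the rotation the two versions of the formulas coincide.

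\textbf{Uniqueness statements.} If $\zeta_1\in\mathbb{T}$, then $|\omega_0(0)|=1$. The maximum principle forces $\omega_0\equiv\zeta_1$, so $\omega=\zeta_1 z$ and $p$ is exactly \eqref{firstp(z)}. If $\zeta_1\in\mathbb{D}$ and $\zeta_2\in\mathbb{T}$, the same argument gives $\omega_1\equiv\zeta_2$. Then
\[
\omega(z)=z\,\frac{\zeta_1+\zeta_2 z}{1+\overline{\zeta_1}\zeta_2 z},
\]
and computing $(1+\omega)/(1-\omega)$ gives \eqref{p(z)} directly. Conversely, these explicit functions do belong to $\mathcal{P}$ and have the prescribed coefficients, which completes the argument.
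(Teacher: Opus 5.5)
The paper gives no proof of this lemma; it quotes it from Cho--Kowalczyk--Lecko--\'Smiarowska, where it rests on the same Schur-parameter representation of Carath\'eodory functions that you reconstruct, and your argument (including the uniqueness part via the maximum principle) is correct, as is your observation that the printed formulas hold only for real $\zeta_1$, which is harmless because the paper applies them only after rotating to $a_2\ge 0$, i.e.\ $\zeta_1\in[0,1]$. One small correction to how you describe the discrepancy: your own Step 1 gives the term $-2(1-|\zeta_1|^2)\overline{\zeta_1}\zeta_2^2$ in $p_3$, so the printed $\zeta_1\zeta_2^2$ term also needs $\overline{\zeta_1}$, not just $|\zeta_1|^2$ in place of $\zeta_1^2$ (likewise, the explicit $p(z)$ in the uniqueness clause matches only the corrected form of $p_2$), and the rotation to $\zeta_1\ge 0$ removes all of these differences at once.
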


\begin{lemma}\label{lem:2}
\cite{Choi2007}:
If $A$, $B$, $C \; \in \mathbb{R}$, let us consider
$$Y(A, B, C):= \max \{ |A + Bz + Cz^{2} | + 1 - |z|^2, \quad z \in \mathbb{\overline{D}} \}$$

Case 1: If $AC \ge 0$, then
    \[Y(A, B, C)= \displaystyle\left\{\begin{array}{ll}
        |A| + |B| + |C|,& |B| \geq 2(1  - |C|), \\
        \\
        1 + |A| + \dfrac{B^2}{4(1 - |C|)},& |B| < 2(1 - |C|).
    \end{array} \right.\]
Case 2: If $AC < 0$, then
\[Y(A, B, C)=  \displaystyle\left\{\begin{array}{ll}
        1 - |A| + \dfrac{B^2}{4(1 - |C|)}, & -4AC(C^{-2} - 1) \le B^2 \wedge |B| < 2(1 - |C|), \\ \\
        1 + |A| + \dfrac{B^2}{4(1 + |C|)}, & B^2 < \min\{4(1 + |C|)^2,  -4AC(C^{-2} - 1)\}, \\ \\
        R(A, B, C), & \text{ Otherwise},
    \end{array}\right.\]
where 
\[R(A, B, C)= \displaystyle\left\{\begin{array}{ll}
        |A| + |B| - |C|, & |C|(|B| + 4|A|) \le |AB|, \\ \\
        -|A| + |B| + |C|, & |AB| \le |C|(|B| -4|A|), \\ \\
        (|C| + |A|) \sqrt{1 - \dfrac{B^2}{4AC}}, & \text{ Otherwise}.
    \end{array}\right. \] 
\end{lemma}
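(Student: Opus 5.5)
The plan is to reduce the two‑dimensional maximization over $\overline{\mathbb{D}}$ to a one‑variable problem in the modulus. Write $z=re^{it}$ with $0\le r\le 1$ and $x=\cos t\in[-1,1]$. Since $A,B,C$ are real, expanding and using $\cos 2t=2x^2-1$ gives
\[
|A+Bz+Cz^2|^2=g_r(x):=(A-Cr^2)^2+B^2r^2+2Br(A+Cr^2)\,x+4ACr^2x^2 .
\]
For fixed $r$ I will first find $M(r):=\max_{x\in[-1,1]}\sqrt{g_r(x)}$. Then I will maximize $h(r):=M(r)+1-r^2$ over $[0,1]$. The split between Case 1 and Case 2 in Lemma~\ref{lem:2} is exactly whether $g_r$ is convex or concave in $x$, i.e.\ the sign of $AC$.

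\textbf{Case $AC\ge 0$.} Here $g_r$ is convex in $x$, so its maximum over $[-1,1]$ is attained at $x=\pm1$. Because $A$ and $C$ have the same sign, this gives $M(r)=|A|+|B|r+|C|r^2$, and hence
\[
h(r)=1+|A|+|B|r-(1-|C|)r^2 .
\]
\begin{itemize}
\item If $|C|\ge1$, then $h$ is nondecreasing on $[0,1]$, so $\max h=h(1)=|A|+|B|+|C|$. In this case $|B|\ge 2(1-|C|)$ holds automatically.
\item If $|C|<1$, then $h$ is concave with vertex at $r_0=|B|/(2(1-|C|))$.
\begin{itemize}
\item When $r_0\ge1$, i.e.\ $|B|\ge 2(1-|C|)$, the maximum is $h(1)=|A|+|B|+|C|$.
\item Otherwise the maximum is $h(r_0)=1+|A|+B^2/(4(1-|C|))$.
\end{itemize}
\end{itemize}
This is precisely Case 1.

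\textbf{Case $AC<0$.} Now $g_r$ is concave in $x$, and its critical point is
\[
x_0(r)=-\frac{B(A+Cr^2)}{4ACr}.
\]
If $|x_0(r)|\le1$, substitute $x_0$ and use $(A+Cr^2)^2-(A-Cr^2)^2=4ACr^2$. This simplifies the value to
\[
M(r)=(|A|+|C|r^2)\sqrt{1-\frac{B^2}{4AC}},
\]
where we use $|A-Cr^2|=|A|+|C|r^2$ since $A,C$ have opposite signs. If $|x_0(r)|>1$, the maximum is at an endpoint $x=\pm1$, giving
\[
M(r)=\bigl||A|-|C|r^2\bigr|+|B|r .
\]
So $h$ is piecewise one of three elementary functions of $r$:
\begin{itemize}
\item the interior branch $1+(|A|+|C|r^2)\sqrt{1-B^2/(4AC)}-r^2$;
\item the endpoint branch $1+|A|+|B|r-(1+|C|)r^2$, valid where $|C|r^2\le|A|$;
\item the endpoint branch $1-|A|+|B|r-(1-|C|)r^2$, valid where $|C|r^2\ge|A|$.
\end{itemize}
The condition $|x_0(r)|\le1$ reads $|B|\,\bigl||A|-|C|r^2\bigr|\le 4|AC|r$, which determines the $r$‑intervals on which each branch applies.

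Maximizing each quadratic branch over its admissible interval produces the listed outcomes. The vertices $r=|B|/(2(1\pm|C|))$ give the values $1\pm|A|+B^2/(4(1\mp|C|))$. The constraint $B^2\lessgtr -4AC(C^{-2}-1)$ is the condition that this vertex lies in the endpoint regime rather than in the interior regime. When no interior vertex is admissible, the maximum sits at $r=1$, which yields $R(A,B,C)$. Its three subcases correspond to whether $x_0(1)$ exceeds $1$ in one direction, in the other, or lies in $[-1,1]$, i.e.\ to $|C|(|B|+4|A|)\le|AB|$, to $|AB|\le|C|(|B|-4|A|)$, or neither. These give respectively $|A|+|B|-|C|$, $-|A|+|B|+|C|$ and $(|A|+|C|)\sqrt{1-B^2/(4AC)}$.

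The main obstacle is this last bookkeeping step in Case 2. One must check which branch is active on which subinterval of $[0,1]$, and verify that the vertex of one branch is never beaten by a value on an adjacent branch, so that the stated case conditions are exactly the right ones. I would handle it by treating $|x_0(r)|\le1$ as a quadratic inequality in $r$, giving explicit breakpoints. I would then compare branch values at those breakpoints, where $M$ is continuous, so the comparisons reduce to monotonicity checks.
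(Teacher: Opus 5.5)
The paper gives no proof of this lemma; it is quoted from Choi--Kim--Sugawa, so your argument has to stand on its own. Your reduction is the natural one: fix $|z|=r$, write $|A+Bz+Cz^2|^2$ as the quadratic $g_r(x)$ in $x=\cos t$, then optimize in $r$. Everything you actually carry out is correct: the formula for $g_r$, the convex/concave split by the sign of $AC$, the interior value $(|A|+|C|r^2)\sqrt{1-B^2/(4AC)}$, the endpoint value $\bigl||A|-|C|r^2\bigr|+|B|r$, and all of Case 1. There is one slip. The vertex of $1\pm|A|+|B|r-(1\pm|C|)r^2$ is at $|B|/(2(1\pm|C|))$ with value $1\pm|A|+B^2/(4(1\pm|C|))$, not $B^2/(4(1\mp|C|))$.

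The gap is that Case 2, which holds essentially all of the lemma's content, is asserted rather than proved. Your claims that ``$B^2\lessgtr-4AC(C^{-2}-1)$ is the condition that this vertex lies in the endpoint regime'' and that ``when no interior vertex is admissible, the maximum sits at $r=1$'' are exactly what must be shown. A priori the maximum of the continuous, piecewise-quadratic $h$ could sit at a breakpoint where $|x_0(r)|=1$, and ``comparing branch values at breakpoints'' does not settle this. The missing idea is a unimodality argument.

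Normalize to $A>0>C$, $B>0$, using $Y(A,B,C)=Y(-A,-B,-C)=Y(A,-B,C)$; the case $B=0$ is immediate. Write $a,b,c$ for the moduli, $K=\sqrt{1+b^2/(4ac)}$ and $T=-4AC(C^{-2}-1)=4a(1-c^2)/c$. Three facts are needed.

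(i) The regimes are consecutive intervals: $E_1=\{r\ge0: b(a-cr^2)\ge 4acr\}=[0,\rho_1]$, then $S=[\rho_1,\rho_2]$, then $E_2=\{r\ge0: b(cr^2-a)\ge 4acr\}=[\rho_2,\infty)$. This holds because $b(a-cr^2)-4acr$ is decreasing and positive at $0$, while $b(cr^2-a)-4acr$ is convex and negative at $0$.

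(ii) On $S$ one has $h(r)=1+aK+(cK-1)r^2$, which is monotone, and $cK\ge1\iff B^2\ge T$.

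(iii) The vertex $r_1=b/(2(1+c))$ lies in $E_1$ iff $B^2\le T$, and, for $c<1$, $r_2=b/(2(1-c))$ lies in $E_2$ iff $B^2\ge T$.

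The key point is that one threshold $T$ governs both the monotonicity of the interior branch and the position of the vertices. Consequently, if $B^2<T$ then $h$ increases on $[0,r_1]$ and decreases afterwards. If $B^2\ge T$ then $h$ is nondecreasing up to $r_2$ and nonincreasing afterwards; when $c\ge1$ it is nondecreasing on all of $[0,1]$. This produces the two vertex cases and otherwise forces the maximum to $r=1$. There $M(1)$ equals $a+b-c$, $-a+b+c$ or $(a+c)K$ according as $1\in E_1$, $1\in E_2$ or $1\in S$, which are exactly the three conditions defining $R(A,B,C)$.

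Your description of these subcases as ``$x_0(1)$ exceeding $1$ in one direction or the other'' is accurate only after this normalization. In general the split is $|x_0(1)|\ge1$ together with $|A|\ge|C|$ or $|A|\le|C|$.
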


\begin{lemma}\label{lem:3}\cite{caratheodory1907,Pommerenke1975}
Let \( p \in \mathcal{P} \). Then, the following inequalities hold true
\begin{eqnarray}
|p_t|  &\leq & 2, \qquad \qquad \quad  t \geq 1, \nonumber\\
|p_{t+2k} - \rho p_t p_k^2| &\leq &  2(1 + 2\rho), \quad 0 \leq \rho \leq 1, \nonumber\\
\left| p_2 - \frac{p_1^2}{2} \right| &\leq& 2 - \frac{|p_1|^2}{2},\nonumber
\end{eqnarray}

and
\begin{equation}
|c_{n+k} - \mu c_n c_k| \leq 2 \max \{ 1, |2\mu - 1| \} =
\left\{
\begin{array}{ll}
2, & \text{if } 0 \leq \mu \leq 1, \\[6pt]
2|2\mu - 1|, & \text{Otherwise}.
\end{array}
\right.\nonumber
\end{equation}
\end{lemma}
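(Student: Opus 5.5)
We treat each inequality of Lemma~\ref{lem:3} through the Herglotz representation of $\mathcal{P}$. Every $p\in\mathcal{P}$ can be written as $p(z)=\int_{\mathbb{T}}\frac{1+xz}{1-xz}\,d\mu(x)$ for a probability measure $\mu$ on $\mathbb{T}$. Hence $p_n=2a_n$, where $a_n:=\int_{\mathbb{T}}x^n\,d\mu(x)$ and $|a_n|\le 1$. The first inequality $|p_t|\le 2$ follows at once. (In the last display we read $c_j$ as $p_j$.)

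\textbf{The functional $p_{n+k}-\mu p_np_k$.} We prove this bound before the second inequality, which depends on it. Using $\mu(\mathbb{T})=1$, we expand
\[
\int_{\mathbb{T}}(x^n-a_n)(x^k-a_k)\,d\mu(x)=a_{n+k}-a_na_k .
\]
Since $|x|=1$ on $\mathbb{T}$, the integral $\int|x^n-a_n|^2d\mu$ expands to $1-|a_n|^2$, and likewise for $k$. The Cauchy--Schwarz inequality therefore gives
\[
|a_{n+k}-a_na_k|\le\sqrt{(1-|a_n|^2)(1-|a_k|^2)} .
\]
Now we write
\[
p_{n+k}-\mu p_np_k=2(a_{n+k}-a_na_k)+2(1-2\mu)a_na_k .
\]
With $u=|a_n|$ and $v=|a_k|$, this yields
\[
|p_{n+k}-\mu p_np_k|\le 2\big[\sqrt{(1-u^2)(1-v^2)}+|1-2\mu|\,uv\big].
\]
A second application of Cauchy--Schwarz, to the vectors $(\sqrt{1-u^2},u)$ and $(\sqrt{1-v^2},v)$, bounds the bracket by $\max\{1,|2\mu-1|\}$.

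\textbf{The second inequality.} Since the expression is affine in $\rho$, we split it as
\[
p_{t+2k}-\rho p_tp_k^2=(1-\rho)p_{t+2k}+\rho\,(p_{t+2k}-p_tp_k^2).
\]
It then suffices to show $|p_{t+2k}-p_tp_k^2|\le 6$, because $2(1-\rho)+6\rho=2(1+2\rho)$. For this we telescope:
\[
p_{t+2k}-p_tp_k^2=\big(p_{t+2k}-p_{t+k}p_k\big)+p_k\big(p_{t+k}-p_tp_k\big).
\]
By the previous paragraph with $\mu=1$, the first term is at most $2$ in modulus and the second is at most $2\cdot 2=4$. This gives the bound $6$.

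\textbf{The third inequality.} We use Lemma~\ref{lem:1}. From \eqref{l1}--\eqref{l2},
\[
p_2-\tfrac12p_1^2=2(1-\zeta_1^2)\zeta_2 ,
\]
with $\zeta_2\in\overline{\mathbb{D}}$. Hence its modulus is at most $2(1-|\zeta_1|^2)=2-\tfrac12|p_1|^2$.

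\textbf{Main obstacle.} The main difficulty is the $\rho$-dependent estimate in the second inequality. The naive triangle inequality only gives $2+8\rho$; the sharp constant $2(1+2\rho)$ comes from the telescoping decomposition combined with the Cauchy--Schwarz bound for $|p_{n+k}-p_np_k|$. This is why that bound has to be established first.
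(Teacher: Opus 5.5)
The paper gives no proof of this lemma; it only cites Carath\'eodory and Pommerenke. Your Herglotz and Cauchy--Schwarz argument for $|p_t|\le 2$ is correct, and so is your bound $|p_{n+k}-\mu p_np_k|\le 2\max\{1,|2\mu-1|\}$. The splitting in $\rho$ combined with the telescoping identity correctly gives $|p_{t+2k}-\rho p_tp_k^2|\le 2(1+2\rho)$. One cosmetic fix: rename the probability measure so that it does not collide with the parameter $\mu$ in the last inequality.

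The step that fails is in the third inequality. You take \eqref{l2} as written, so $p_2-\tfrac12p_1^2=2(1-\zeta_1^2)\zeta_2$ with $\zeta_1\in\overline{\mathbb{D}}$, and then bound $|1-\zeta_1^2|$ by $1-|\zeta_1|^2$. That inequality goes the wrong way. By the reverse triangle inequality $|1-\zeta_1^2|\ge 1-|\zeta_1|^2$, and the inequality is strict whenever $\zeta_1\notin\mathbb{R}$. For example, $\zeta_1=i/2$ gives $|1-\zeta_1^2|=5/4$, while $1-|\zeta_1|^2=3/4$. So from \eqref{l2} you only obtain $|p_2-\tfrac12p_1^2|\le 5/2$, not the required $2-\tfrac12|p_1|^2=3/2$.

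The factor $(1-\zeta_1^2)$ in Lemma~\ref{lem:1} is the form valid after normalizing $p_1\ge 0$, that is $\zeta_1\in[0,1]$. For general $\zeta_1$ the correct factor is $1-|\zeta_1|^2$. You can repair this in either of two ways:
\begin{itemize}
\item Normalize first. The rotation $p(z)\mapsto p(e^{i\theta}z)$ multiplies $p_2-\tfrac12p_1^2$ by $e^{2i\theta}$ and leaves $|p_1|$ unchanged, so you may assume $p_1\ge 0$.
\item Avoid Lemma~\ref{lem:1} altogether. Your own Cauchy--Schwarz estimate with $n=k=1$ gives $|a_2-a_1^2|\le 1-|a_1|^2$. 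Hence $|p_2-\tfrac12p_1^2|=2|a_2-a_1^2|\le 2-\tfrac12|p_1|^2$ directly.
\end{itemize}
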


\begin{lemma}\label{lem:5}\cite{zaprawa2021}
Let $w \in \mathcal{H}$, are said to be schwarz function such that $w(0)=0$ and $|w(z)|<1$ for all $z \in \mathbb{D}$, and have the following series:
\begin{equation}
w(z)=\sum_{n=1}^{\infty}b_nz^n \label{schwarzfunc}
\end{equation}
Then, the following inequalities hold true
\begin{eqnarray}
|b_1|  &\leq & 1,\nonumber\\
|b_2| & \leq & 1-|b_1|^2,\nonumber \\
|b_3| &\leq &  1-|b_1|^2-\frac{|b_2|^2}{1+|b_1|}. \nonumber
\end{eqnarray}
\end{lemma}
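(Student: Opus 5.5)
The three inequalities in Lemma~\ref{lem:5} follow from the Schwarz--Pick lemma, applied twice through the Schur algorithm. Set $\varphi(z)=w(z)/z=b_1+b_2z+b_3z^2+\cdots$. By the Schwarz lemma, $\varphi$ is analytic in $\mathbb{D}$ with $|\varphi(z)|\le 1$, so $|b_1|=|\varphi(0)|\le 1$. This gives the first inequality.

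\textbf{Degenerate case.} If $|b_1|=1$, the maximum principle forces $\varphi\equiv b_1$. Then $w(z)=b_1z$, so $b_2=b_3=0$, and the remaining inequalities hold with equality. From now on assume $|b_1|<1$, so that $\varphi:\mathbb{D}\to\mathbb{D}$.

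\textbf{Bound on $b_2$.} The Schwarz--Pick inequality at the origin gives $|\varphi'(0)|\le 1-|\varphi(0)|^2$, that is, $|b_2|\le 1-|b_1|^2$.

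\textbf{Bound on $b_3$.} I would perform one step of the Schur algorithm. Define
\[
\psi(z)=\frac{\varphi(z)-b_1}{z\bigl(1-\overline{b_1}\varphi(z)\bigr)},
\]
which is again analytic with $|\psi|\le 1$ in $\mathbb{D}$, by the Schwarz lemma applied to the disk automorphism composed with $\varphi$. Expand numerator and denominator: the numerator is $z(b_2+b_3z+\cdots)$ and the denominator is $z\bigl((1-|b_1|^2)-\overline{b_1}b_2z+\cdots\bigr)$. This gives
\[
\psi(z)=\frac{b_2}{1-|b_1|^2}+\left(\frac{b_3}{1-|b_1|^2}+\frac{\overline{b_1}\,b_2^2}{(1-|b_1|^2)^2}\right)z+\cdots .
\]
If $|\psi(0)|=1$, then $\psi$ is constant, so its linear coefficient vanishes and $b_3=-\overline{b_1}b_2^2/(1-|b_1|^2)$; the estimate below then holds directly. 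Otherwise, apply Schwarz--Pick to $\psi$, namely $|\psi'(0)|\le 1-|\psi(0)|^2$, and multiply by $1-|b_1|^2$:
\[
\left|b_3+\frac{\overline{b_1}\,b_2^2}{1-|b_1|^2}\right|\le 1-|b_1|^2-\frac{|b_2|^2}{1-|b_1|^2}.
\]

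\textbf{Conclusion.} By the triangle inequality,
\[
|b_3|\le 1-|b_1|^2-\frac{|b_2|^2(1-|b_1|)}{1-|b_1|^2}=1-|b_1|^2-\frac{|b_2|^2}{1+|b_1|},
\]
which is the third inequality.

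The only step needing care is the expansion of $\psi$, specifically tracking the cross term $\overline{b_1}b_2^2$ correctly. The rest is routine.
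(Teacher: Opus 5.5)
Your proof is correct and complete. The paper gives no proof of this lemma; it quotes the three inequalities from the literature (Zaprawa, 2021), so there is no argument of the paper's to compare against. What you wrote is the standard derivation of these classical bounds: one step of the Schur algorithm, then Schwarz--Pick at the origin.

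I checked the steps that could go wrong:
\begin{itemize}
\item The expansion
\[
\psi(z)=\frac{b_2}{1-|b_1|^2}+\Bigl(\frac{b_3}{1-|b_1|^2}+\frac{\overline{b_1}\,b_2^2}{(1-|b_1|^2)^2}\Bigr)z+\cdots
\]
is right.
\item The final simplification $\frac{1-|b_1|}{1-|b_1|^2}=\frac{1}{1+|b_1|}$ is right.
\item The degenerate case $|b_1|=1$ is genuinely needed, because you divide by $1-|b_1|^2$, and you handle it.
\end{itemize}

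Two minor remarks. First, the separate treatment of $|\psi(0)|=1$ is unnecessary. The inequality $|\psi'(0)|\le 1-|\psi(0)|^2$ holds for every analytic $\psi$ with $|\psi|\le 1$, and for a unimodular constant it reduces trivially to $0\le 0$. Second, your intermediate estimate
\[
\Bigl|b_3+\frac{\overline{b_1}\,b_2^2}{1-|b_1|^2}\Bigr|\le 1-|b_1|^2-\frac{|b_2|^2}{1-|b_1|^2}
\]
is stronger than the lemma's third inequality. It also constrains the argument of $b_3$ relative to $\overline{b_1}b_2^2$, and the stated bound is only its triangle-inequality consequence.
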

\section{Hankel determinant}
We begin by first establishing the bounds for the initial coefficients of the function $f \in \mathcal{S}^{\ast}_{B}$:

\begin{theorem}
Let $f \in \mathcal{S}^{\ast}_{B}$. Then, the following inequalities hold true 
\begin{equation}
|a_2| \leq 1, \quad |a_3| \leq \dfrac{3}{4}, \quad |a_4|\le\dfrac{19}{36}, \quad |a_5|\le\dfrac{101}{288}.\nonumber
\end{equation}
These inequalities are sharp.
\end{theorem}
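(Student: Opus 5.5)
The plan is to write the subordination through a Schwarz function and read off the coefficients. Since $f\in\mathcal{S}^{\ast}_{B}$, there is a Schwarz function $w(z)=\sum_{n\ge1} b_n z^n$ as in \eqref{schwarzfunc} such that $zf'(z)/f(z)=B(w(z))$. First expand $B$. With $L=\log(1+z)=z-\tfrac{z^2}{2}+\tfrac{z^3}{3}-\tfrac{z^4}{4}+\cdots$ and $B=1+L+L^2+L^3+L^4+\cdots$, one gets
\[
B(z)=1+z+\tfrac12 z^2+\tfrac13 z^3+\tfrac16 z^4+\cdots .
\]
Write $zf'/f=1+\sum_{n\ge1} c_n z^n$, with
\[
c_1=b_1,\quad c_2=b_2+\tfrac12 b_1^2,\quad c_3=b_3+b_1b_2+\tfrac13 b_1^3,
\]
and $c_4=b_4+\tfrac12(b_2^2+2b_1b_3)+b_1^2b_2+\tfrac16 b_1^4$. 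The recursion $(n-1)a_n=\sum_{k=1}^{n-1}c_k a_{n-k}$ (with $a_1=1$) then gives
\[
a_2=b_1,\qquad a_3=\tfrac12\Big(b_2+\tfrac32 b_1^2\Big),\qquad a_4=\tfrac13\Big(b_3+\tfrac52 b_1b_2+\tfrac{19}{12}b_1^3\Big),
\]
together with an analogous expression for $a_5$ in $b_1,\dots,b_4$.

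The bounds for $a_2$ and $a_3$ are immediate from Lemma \ref{lem:5}. We have $|a_2|=|b_1|\le1$. Also
\[
2|a_3|\le |b_2|+\tfrac32|b_1|^2\le 1+\tfrac12|b_1|^2\le \tfrac32,
\]
so $|a_3|\le 3/4$. For $a_4$, set $x=|b_1|\in[0,1]$ and $y=|b_2|\in[0,1-x^2]$, and use the third inequality of Lemma \ref{lem:5}. This gives $3|a_4|\le g(x,y)$, where
\[
g(x,y)=1-x^2-\frac{y^2}{1+x}+\frac52 xy+\frac{19}{12}x^3 .
\]
For fixed $x$, $g$ is a concave quadratic in $y$. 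Its maximum is either at the critical point $y^\ast=\tfrac54 x(1+x)$, if $y^\ast\le 1-x^2$, or at the endpoint $y=1-x^2$. Each case reduces to a one-variable inequality in $x$, which is to be shown $\le 19/12$, with equality only at $x=1,\ y=0$. A preliminary check near $x=1$ along $y=1-x^2$ shows that $g$ decreases toward the endpoint value $19/12$. Hence I expect $|a_4|\le 19/36$.

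For $a_5$, Lemma \ref{lem:5} gives no bound on $b_4$, so I would switch to the Carath\'eodory representation. Put $w=(p-1)/(p+1)$ with $p\in\mathcal{P}$, express $a_5$ as a polynomial in $p_1,\dots,p_4$, and group the terms into combinations of the type $p_4-\mu p_1p_3$, $p_2-\nu p_1^2$, $p_3-\rho p_1p_2$ with parameters in the admissible ranges of Lemma \ref{lem:3}. The aim is for the triangle inequality to produce exactly $4\cdot\tfrac{101}{288}$ times the normalizing factor. If a clean grouping fails, the fallback is to substitute the parametrization of Lemma \ref{lem:1} (extended to $p_4$) and maximize over $\zeta_1,\zeta_2,\zeta_3$. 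I expect $a_5$ to be the main obstacle: the $a_4$ step is a routine two-variable calculus check, but for $a_5$ one needs to find a grouping whose coefficients fall into the admissible regions of Lemma \ref{lem:3}.

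Sharpness is given by $f_1$ in \eqref{1.45}, i.e.\ $w(z)=z$. Its coefficients are exactly $1,\ 3/4,\ 19/36,\ 101/288$, so all four bounds are attained.
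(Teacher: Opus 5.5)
The $a_2$ and $a_3$ estimates and the sharpness argument via $f_1$ are fine, but your $a_4$ step fails. The inequality $g(x,y)\le 19/12$ on $\{0\le x\le1,\ 0\le y\le 1-x^2\}$ is false. On the boundary arc $y=1-x^2$ one gets $g(x,1-x^2)=\tfrac72 x-\tfrac{23}{12}x^3$, whose derivative at $x=1$ is $-\tfrac94$. So the fact you observed, that $g$ decreases toward the endpoint value $19/12$, means $g>19/12$ just to the left of $x=1$. The maximum on that arc is $\tfrac73\sqrt{14/23}\approx1.82$, attained at $(x,y)=(\sqrt{14/23},\,9/23)$, so this route only yields $|a_4|\le\tfrac79\sqrt{14/23}\approx0.607$.

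The loss is structural. Lemma~\ref{lem:5} controls only moduli. When $|b_2|=1-|b_1|^2$, the Schwarz function is $z$ times a disc automorphism, which forces $b_3=-\overline{b_1}\,b_2^2/(1-|b_1|^2)$. For $b_1,b_2>0$ this has the opposite sign to $\tfrac52 b_1b_2$, so the triangle inequality adds terms that in fact cancel.

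The paper avoids this by using Carath\'eodory coefficients. With $w=(p-1)/(p+1)$ one has $a_4=\tfrac{1}{288}(p_1^3+12p_1p_2+48p_3)$. All coefficients are positive, so $|p_k|\le2$ gives $152/288=19/36$ at once. If you want to stay with Schwarz coefficients, you need a phase-sensitive estimate, such as the Prokhorov--Szynal bound for $|b_3+\mu b_1b_2+\nu b_1^3|$. At $(\mu,\nu)=(5/2,19/12)$ that bound equals $|\nu|$.

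For $a_5$ you give only a plan, and as stated it does not close. In $4608\,a_5=576p_4+96p_1p_3+24p_1^2p_2-7p_1^4$, the plain triangle inequality gives $1840$. Grouping as $24p_1^2(p_2-\tfrac{7}{24}p_1^2)$ and using the constant bound $|p_2-\mu p_1^2|\le2$ of Lemma~\ref{lem:3} gives $1728$. But $1616$ is needed.

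The missing ingredient is a $|p_1|$-dependent estimate that absorbs the negative $p_1^4$ term. From the third inequality of Lemma~\ref{lem:3},
$|p_2-\tfrac{7}{24}p_1^2|\le|p_2-\tfrac12p_1^2|+\tfrac{5}{24}|p_1|^2\le 2-\tfrac{7}{24}|p_1|^2$.
Hence $4608|a_5|\le 1152+192r+48r^2-7r^4$ with $r=|p_1|\in[0,2]$. The right side has derivative $192+96r-28r^3$, which is concave in $r$ and equals $192$ and $160$ at the endpoints, so it is positive on $[0,2]$. The maximum is therefore $1616$ at $r=2$, i.e.\ $|a_5|\le 101/288$.

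The paper states this step as immediate from Lemma~\ref{lem:3}; the grouping above is what actually makes it work. It is also what your proposal needs in place of the open-ended ``grouping or fallback''.
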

\begin{proof}
Let $f \in \mathcal{S}^{\ast}_{B}$. Then there exists a Schwarz function $w(z)$ such that
\begin{equation}
    \dfrac{zf'(z)}{f(z)}= \dfrac{1}{1-\log(1+w(z))}. \label{eq:2.3}
\end{equation}
Suppose that $w(z)=(p(z)-1)/(p(z)+1)$, where $p \in \mathcal{P}$ is given by \eqref{pp}. Substituting this expression, along with \eqref{eq:1.1} and \eqref{pp}, into \eqref{eq:2.3}, we obtain the following relations between the coefficients:
\begin{eqnarray}
a_{2}&=&\dfrac{1}{2}p_{1},\label{a2}\\
a_{3}&=&\dfrac{1}{16}(p_1^2+4p_{2}),\label{a3}\\
a_{4}&=&\dfrac{1}{288}\left(p_{1}^{3}+12p_{1}p_{2}+48p_{3}\right),\label{a4}\\
a_5&=&-\dfrac{1}{4608}\left(7p_1^4-24p_1^2p_2-96p_1p_3-576p_4\right). \label{a5}
\end{eqnarray}
From \eqref{a2},\eqref{a3},\eqref{a4} and \eqref{a5}, it follows that
\begin{equation}
    |a_2|\le\dfrac{1}{2}|p_{1}|,\quad |a_3|\le\dfrac{1}{16}\left|p_1^2+4p_{2}\right|, \quad |a_4|\le\dfrac{1}{288}\left|p_{1}^{3}+12p_{1}p_{2}+48p_{3}\right|,\nonumber\end{equation}
    \begin{equation}|a_5|\le\dfrac{1}{4608}\left|7p_1^4-24p_1p_2-96p_1p_3-576p_4\right|.\nonumber\end{equation}
Using Lemma~\ref{lem:3}, we immediately deduce that  
\begin{equation}
|a_2| \leq 1, \quad |a_3| \leq \dfrac{3}{4}, \quad |a_4|\le\dfrac{19}{36}, \quad |a_5|\le\dfrac{101}{288}.\nonumber
\end{equation}
The sharpness of these inequalities is attained for the extremal function $f_1(z) \in \mathcal{S}^{\ast}_{B}$, defined in~\eqref{1.45}.
\end{proof}
We now derive the sharp bounds of Fekete-Szeg\"o functional $|a_{3}-\mu a_{2}^{2}|$ for $f \in \mathcal{S}^{\ast}_{B}$. Some recent results on the Fekete-Szeg\"o functional see \cite{Kowalczyk2014}.
\begin{theorem}
Let $f \in \mathcal{S}^{\ast}_{B}$. Then for any complex number $\mu \in \mathbb{C}$, the following inequality holds:
\begin{equation}
|a_{3} - \mu a_{2}^{2}| \leq \tfrac{1}{2} \max \left\{ 1, \left|\mu-\tfrac{3}{4} \right| \right\}.\nonumber
\end{equation}
This inequality is sharp.
\end{theorem}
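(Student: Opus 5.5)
Using \eqref{a2} and \eqref{a3}, we write $a_3-\mu a_2^2$ in terms of the Carath\'eodory coefficients:
\[
a_3-\mu a_2^2=\frac{1}{16}\left(p_1^2+4p_2\right)-\frac{\mu}{4}p_1^2=\frac{1}{4}\left(p_2-\left(\mu-\tfrac14\right)p_1^2\right).
\]
This reduces the problem to the classical functional $|p_2-\nu p_1^2|$ with $\nu=\mu-\tfrac14$. To bound it we apply the last inequality of Lemma~\ref{lem:3} with $n=k=1$ and $c_j=p_j$, after writing $\nu p_1^2=\tfrac{\mu'}{1}\,p_1p_1$ with $\mu'=\nu$. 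This gives
\[
|p_2-\nu p_1^2|\le 2\max\{1,|2\nu-1|\}.
\]
Hence
\[
|a_3-\mu a_2^2|\le \tfrac12\max\{1,|2\mu-\tfrac32|\}.
\]
This does not match the stated form: $|2\mu-\tfrac32|=2|\mu-\tfrac34|$, whereas the statement has $|\mu-\tfrac34|$. So I will recheck by a Schwarz-function computation, which is cleaner for complex $\mu$.

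Write $w(z)=b_1z+b_2z^2+\cdots$ and expand $B(w)=1+w+\tfrac{1}{2}w^2+\cdots$. Here $\log(1+w)=w-w^2/2+\cdots$ and $1/(1-L)=1+L+L^2+\cdots$, so $B(w)=1+w-\tfrac{w^2}{2}+w^2+\cdots=1+w+\tfrac12 w^2+\cdots$. Thus $B_1=1$ and $B_2=\tfrac12$. Comparing coefficients in $zf'/f=B(w)$ gives $a_2=b_1$ and $2a_3-a_2^2=b_2+\tfrac12 b_1^2$, so
\[
a_3=\tfrac12 b_2+\tfrac34 b_1^2,\qquad a_3-\mu a_2^2=\tfrac12\left(b_2-(2\mu-\tfrac32)b_1^2\right).
\]
By Lemma~\ref{lem:5}, $|b_2|\le 1-|b_1|^2$. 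The triangle inequality then gives
\[
|a_3-\mu a_2^2|\le \tfrac12\left(1-|b_1|^2+|2\mu-\tfrac32|\,|b_1|^2\right)\le \tfrac12\max\{1,|2\mu-\tfrac32|\},
\]
since the middle expression is affine in $|b_1|^2\in[0,1]$. This agrees with the $p$-computation, since $\tfrac{1}{16}(p_1^2+4p_2)$ with $p_1=2$, $p_2=2$ gives $a_3=\tfrac34$, consistent with $\tfrac34 b_1^2$ at $b_1=1$.

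Sharpness comes from two extremal functions. The first case is $|2\mu-\tfrac32|\ge1$. Then $f_1$ of \eqref{1.45} gives $a_2=1$, $a_3=\tfrac34$, and $|a_3-\mu a_2^2|=|\tfrac34-\mu|=\tfrac12|2\mu-\tfrac32|$, so equality holds. The second case is $|2\mu-\tfrac32|<1$. Then $f_2$ of \eqref{1.466} gives $a_2=0$, $a_3=\tfrac12$, and the value is $\tfrac12$, so equality holds again.

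The main obstacle is reconciling the constant inside the maximum. The bound I obtain, and which is attained, is $\tfrac12\max\{1,2|\mu-\tfrac34|\}=\max\{\tfrac12,|\mu-\tfrac34|\}$. Before writing the proof I would double-check the coefficient $B_2=\tfrac12$, since everything hinges on it. If it is correct, the stated $\tfrac12\max\{1,|\mu-\tfrac34|\}$ contains a typo: it fails at $\mu=0$ in the other direction, since $|a_3|=\tfrac34$ while the stated bound gives $\tfrac12\cdot1=\tfrac12$. I would prove the corrected form $\max\{\tfrac12,|\mu-\tfrac34|\}$.
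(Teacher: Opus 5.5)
Your proposal is correct, including your diagnosis that the printed statement is wrong. Your first computation is exactly the paper's proof. The paper writes $a_3-\mu a_2^2=\frac{1}{16}\bigl((1-4\mu)p_1^2+4p_2\bigr)$ and appeals to the last inequality of Lemma~\ref{lem:3}. It then reports $\max\{1,|2(\mu-\frac14)-1|\}$ as $\max\{1,|\mu-\frac34|\}$ instead of $\max\{1,2|\mu-\frac34|\}$, dropping a factor of $2$.

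As printed, the theorem is false. For $f_1$ one has $a_2=1$ and $a_3=\frac34$, so $|a_3-\mu a_2^2|=|\mu-\frac34|$. This exceeds $\frac12\max\{1,|\mu-\frac34|\}$ for every $\mu$ with $|\mu-\frac34|>\frac12$. At $\mu=0$ it even contradicts the paper's own sharp estimate $|a_3|\le\frac34$.

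The paper's sharpness claim cites only $f_2$, which gives the value $\frac12$, so it is also incomplete. Your choice of $f_1$ when $|\mu-\frac34|\ge\frac12$ and $f_2$ otherwise is what makes $\max\{\frac12,|\mu-\frac34|\}$ sharp for all $\mu\in\mathbb{C}$. The printed and corrected bounds coincide exactly when $|\mu-\frac34|\le\frac12$. That range includes $\mu=1$, so the paper's corollary $|\mathcal{H}_{2,1}(f)|\le\frac12$ is unaffected.

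Your Schwarz-coefficient check is a self-contained alternative that handles complex $\mu$ directly. It uses $a_3-\mu a_2^2=\frac12\bigl(b_2-(2\mu-\frac32)b_1^2\bigr)$ together with $|b_2|\le 1-|b_1|^2$ from Lemma~\ref{lem:5}. It is worth preferring here because of a flaw in Lemma~\ref{lem:3} as printed. Its piecewise form of the last inequality ($2$ for $0\le\mu\le1$, $2|2\mu-1|$ otherwise) is valid only for real $\mu$. The form $2\max\{1,|2\mu-1|\}$ that you used is the one that holds for complex $\mu$.
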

\begin{proof}
Let $f \in \mathcal{S}^{\ast}_{B}$. Then from \eqref{a2}, \eqref{a3} and using lemma \ref{lem:3} we get,
\begin{equation}
    |a_3-\mu a_2^2|=\dfrac{1}{16}\left|(1-4\mu )p_1^2+4p_2\right|\leq\dfrac{1}{2} \max \left\{ 1, \left|\mu-\dfrac{3}{4} \right|\right\}.\nonumber
\end{equation}
The sharpness of these inequalities follows from the function $f_2(z) \in \mathcal{S}^{\ast}_{B}$, given by \eqref{1.466}. 
\end{proof}
Note that, when $\mu=1$, we have $\mathcal{H}_{2,1}(f)=a_{3}-a_{2}^{2}$. Hence, from the above theorem, the sharp bound for $\mathcal{H}_{2,1}(f)$ is given by $|\mathcal{H}_{2,1}(f)|\leq\tfrac{1}{2}$. Equality in this bound is attained for the function $f_2(z) \in \mathcal{S}^{\ast}_{B}$, as defined in \eqref{1.466}.

We now deduce the second order Hankel determinant associated with initial coefficients for $f \in \mathcal{S}^{\ast}_{B}$:
\begin{theorem}
Let $f \in \mathcal{S}^{\ast}_{B}$. Then
\begin{equation}
|\mathcal{H}_{2,2}(f)| \leq \dfrac{1}{4}. \label{eq:a2a4-a3^2}
\end{equation}
This inequality is sharp.
\end{theorem}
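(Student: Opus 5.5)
The plan is to express $\mathcal{H}_{2,2}(f)=a_2a_4-a_3^2$ in terms of the Carath\'eodory coefficients, parametrize them with Lemma~\ref{lem:1}, bound the $\zeta_2$-dependence with Lemma~\ref{lem:2}, and finish with a one-variable maximization. From \eqref{a2}, \eqref{a3} and \eqref{a4} we obtain, after putting everything over the common denominator $2304$,
\begin{equation*}
a_2a_4-a_3^2=\frac{1}{2304}\left(-5p_1^4-24p_1^2p_2+192p_1p_3-144p_2^2\right).
\end{equation*}
The class $\mathcal{S}^{\ast}_{B}$ is rotation invariant, so we may take $p_1=2\zeta_1=2t$ with $t\in[0,1]$. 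Substituting \eqref{l1}--\eqref{l3}, and writing $s=1-t^2$, $\zeta=\zeta_2$, $\eta=\zeta_3$, a direct computation should give
\begin{equation*}
\mathcal{H}_{2,2}(f)=\frac{1}{2304}\Big(-80t^4+192st^2\zeta-s(576+192t^2)\zeta^2+768ts(1-|\zeta|^2)\eta\Big).
\end{equation*}

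We first handle the endpoints directly. For $t=0$ this reduces to $-\zeta^2/4$, so $|\mathcal{H}_{2,2}(f)|\le 1/4$. For $t=1$ it equals $-80/2304$, which is much smaller.

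For $t\in(0,1)$, we use $|\eta|\le1$ and factor out $768ts/2304=ts/3$. This gives
\begin{equation*}
|\mathcal{H}_{2,2}(f)|\le \frac{ts}{3}\Big(|A+B\zeta+C\zeta^2|+1-|\zeta|^2\Big),
\end{equation*}
with
\begin{equation*}
A=-\frac{5t^3}{48(1-t^2)},\qquad B=\frac{t}{4},\qquad C=-\frac{3+t^2}{4t}.
\end{equation*}
We are then in Lemma~\ref{lem:2}, Case~1, since $AC\ge0$. Moreover $|C|\ge1$, because $3+t^2-4t=(1-t)(3-t)\ge0$. Hence $|B|\ge 0\ge 2(1-|C|)$, and therefore $Y=|A|+|B|+|C|$. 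This yields
\begin{equation*}
|\mathcal{H}_{2,2}(f)|\le \frac13\left(\frac{5t^4}{48}+\frac{(1-t^2)(3+2t^2)}{4}\right)=\frac13\left(\frac34-\frac{t^2}{4}-\frac{19t^4}{48}\right)=:g(t).
\end{equation*}
The function $g$ is clearly decreasing on $[0,1]$, so $g(t)\le g(0)=1/4$. Combined with the endpoint cases, this establishes \eqref{eq:a2a4-a3^2}.

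For sharpness we use $f_2$ from \eqref{1.466}, for which $a_2=a_4=0$ and $a_3=1/2$. This gives $\mathcal{H}_{2,2}(f_2)=-1/4$. It corresponds to $t=0$, $|\zeta_2|=1$, i.e. $p(z)=(1+z^2)/(1-z^2)$.

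The main obstacle is the algebra in the substitution step. One must get the coefficients of $\zeta$ and $\zeta^2$ exactly right, since an error there can change the case of Lemma~\ref{lem:2} and hence the final bound. I would double-check them by verifying the value $-80/2304$ at $t=1$ against the extremal function $f_1$, and the value $-1/4$ at $t=0$ against $f_2$.
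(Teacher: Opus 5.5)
Your proposal is correct and follows essentially the same route as the paper. Both arguments use the Lemma~\ref{lem:1} parametrization with $\zeta_1=t\in[0,1]$, treat the endpoints $t=0,1$ directly, and then apply the triangle inequality in $\zeta_3$ followed by Case~1 of Lemma~\ref{lem:2} with the same $A,B,C$; this gives $\frac13\bigl(\frac34-\frac{t^2}{4}-\frac{19t^4}{48}\bigr)\le\frac14$, with sharpness from $f_2$. Your observation that $|C|\ge1$, via $(1-t)(3-t)\ge0$, explicitly justifies the Case~1 condition that the paper only asserts, and your sign for the $\zeta_2$-term agrees with the $A,B,C$ the paper actually uses.
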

\begin{proof}
Let $f \in \mathcal{S}^{\ast}_{B}$. Using \eqref{a2},\eqref{a3} and \eqref{a4} in \eqref{h21 and h22} for $\mathcal{H}_{2,2}(f)$, we obtain   
\begin{equation}
\left| a_{2}a_{4} - a_3^2 \right| = \dfrac{1}{2304}\left|5 p_1^4+24 p_1^2 p_2 -192p_1 p_3 + 144 p_2^2 \right| \label{h22}
\end{equation}
Since the class $\mathcal{S}^{\ast}_{B}$ and $| a_{2}a_{4} - a_3^2 |$ are invariant under rotation. For instance, considering the function $f_\theta(z) := e^{-i \theta} f(e^{i \theta} z)$, where $f \in \mathcal{S}$ and $\theta \in \mathbb{R}$, we obtain
\begin{equation*}
    \left(a_2 a_4- a_3^2\right)_{\theta} = e^{4i\theta} \left(a_2 a_4- a_3^2\right)
\end{equation*}
Since $|(a_2 a_4- a_3^2)_{\theta}|=  |a_2 a_4- a_3^2|$, thus $|\mathcal{H}_{2,2}(f)|$ remains invariant under rotation. Now, by applying Lemma \ref{lem:1} in \eqref{h22}, we get  
\begin{equation}
\left| a_{2}a_{4} - a_3^2 \right| =\dfrac{1}{144}\left|12\zeta_2^2(\zeta_1^4+2\zeta_1^2-3)+12\zeta_1^2(\zeta_1^2-1)\zeta_2-5\zeta_1^4+48\zeta_1\zeta_3(1-\zeta_1^2)(1-|\zeta_2|^2)\right|, \label{a2a4-a3^2}
\end{equation}
Since $|\zeta_3|\le 1$, from \eqref{a2a4-a3^2}, we have the following cases for $\zeta_{1} = 0$ and $\zeta_{1} = 1$:
\[|a_2a_4-a_3^2|=  \left\{ \begin{array}{ll}
\dfrac{|\zeta_2|^2}{4}\leq \dfrac{1}{4},  &\zeta_{1}=0,\\\\
\dfrac{5}{144}, &\zeta_{1}=1.
\end{array}\right.
\]
For $\zeta_1 \in (0,1)$, applying the triangle inequality to \eqref{a2a4-a3^2} and using $|\zeta_3|\le 1$, we obtain
\begin{equation}
    |a_2a_4-a_3^2| \leq \dfrac{1}{3} \zeta_{1}(1-\zeta_{1}^{2})\; \Psi(A,B,C), \label{a2a4-a3^2 phi}
\end{equation}
where 
\begin{equation*} 
    \Psi(A,B,C):= |A+B\zeta_2+C\zeta_2 ^2| + 1-|\zeta_2|^2,
\end{equation*}
and
\[
    A=-\dfrac{5\zeta_1^3}{48(1-\zeta_1^2)},\quad B=\dfrac{1}{4}\zeta_1, \quad C=-\dfrac{3+\zeta_1^2}{4\zeta_1}. 
\]
We now consider the cases in Lemma \ref{lem:2}. For $\zeta_{1} \in (0,1)$, it can be observed that $AC \ge 0$ and $|B|\ge 2(1-|C|).$
Hence, by Lemma \ref{lem:2}, we have
\[\Psi (A,B,C) \le |A|+|B|+|C|.\] 
Substituting this estimate into \eqref{a2a4-a3^2 phi}, we get
\begin{eqnarray*}
    |a_2a_4-a_3^2| &\leq& \dfrac{1}{3} \zeta_{1}(1-\zeta_{1}^{2})\left(|A|+|B|+|C|\right)\\
    &\le&\dfrac{1}{3} \zeta_{1}(1-\zeta_{1}^{2})\left(\left|\dfrac{5\zeta_{1}^{3}}{48(1-\zeta_{1}^2)}\right|+\left|\dfrac{\zeta_1}{4}\right|+\left|\dfrac{3+\zeta_{1}^{2}}{4\zeta_{1}}\right|\right)\le\dfrac{1}{4}.
\end{eqnarray*}
\\
Thus, from the above result, it follows that the inequality \eqref{eq:a2a4-a3^2} holds true. The bound is sharp, and equality is achieved for the function $f_{2}(z) \in \mathcal{S}^{\ast}_{B}$, as defined in~\eqref{1.466}.
\end{proof}

We now proceed to establish the sharp bounds for the logarithmic coefficients of the function $f \in \mathcal{S}^{\ast}_{B}$:
\begin{theorem}
Let $f \in \mathcal{S}^{\ast}_{B}$. Then, the following inequalities for the logarithmic coefficients are true:
\[
|\gamma_1| \leq \dfrac{1}{2},\quad|\gamma_2| \leq \dfrac{1}{4}, \quad |\gamma_3| \leq \dfrac{1}{8}.
\]
These bounds are sharp.
\end{theorem}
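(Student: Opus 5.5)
The plan is to avoid the Carath\'eodory parametrisation and work directly with the Schwarz function, using Lemma~\ref{lem:5}. Write $zf'(z)/f(z)=B(w(z))$ with $w(z)=\sum_{n\ge1}b_nz^n$ as in \eqref{schwarzfunc}. Differentiating $F_f(z)=\log(f(z)/z)=2\sum\gamma_nz^n$ gives
\[
\sum_{n\ge1}2n\gamma_n z^{n}=B(w(z))-1 .
\]
From $\log(1+z)=z-\tfrac{z^2}{2}+\tfrac{z^3}{3}-\cdots$ and $1/(1-L)=1+L+L^2+L^3+\cdots$ I get
\[
B(z)-1=z+\tfrac12 z^2+\tfrac13 z^3+\cdots .
\]
Comparing coefficients then yields
\[
\gamma_1=\tfrac12 b_1,\qquad \gamma_2=\tfrac14\Bigl(b_2+\tfrac12 b_1^2\Bigr),\qquad \gamma_3=\tfrac16\Bigl(b_3+b_1b_2+\tfrac13 b_1^3\Bigr).
\]
These agree with \eqref{log coeff} combined with \eqref{a2}--\eqref{a4}.

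The first two bounds follow at once from Lemma~\ref{lem:5}. First, $|\gamma_1|\le\tfrac12|b_1|\le\tfrac12$. Second, with $x=|b_1|$,
\[
|\gamma_2|\le\tfrac14\Bigl(1-x^2+\tfrac12x^2\Bigr)\le\tfrac14 .
\]
Equality holds for $w(z)=z$ (the function $f_1$) and $w(z)=z^2$ (the function $f_2$), respectively.

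For $\gamma_3$, put $x=|b_1|$ and $y=|b_2|\le 1-x^2$. Lemma~\ref{lem:5} gives
\[
6|\gamma_3|\le \Phi(x,y):=1-x^2-\frac{y^2}{1+x}+xy+\frac{x^3}{3}.
\]
Since $\Phi$ is concave in $y$, its unconstrained maximiser is $y=x(1+x)/2$, which is admissible exactly when $x\le 2/3$. In that range the maximum is
\[
1-\tfrac34x^2+\tfrac7{12}x^3,
\]
which is decreasing on $[0,2/3]$ and hence at most $1$. For $x\in[2/3,1]$ the maximiser is the endpoint $y=1-x^2$, giving $\Phi=2x-\tfrac53x^3\le \tfrac{68}{81}<1$. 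Therefore $|\gamma_3|\le\tfrac16$, with equality for $w(z)=z^3$, i.e.\ $f(z)=z\exp\int_0^z\frac{B(t^3)-1}{t}\,dt$. For this function $\log(f/z)=\tfrac13z^3+\cdots$, so $\gamma_3=\tfrac16$. The optimisation over $(x,y)$ is the only step requiring care, and it is elementary.

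\textbf{The stated bound for $\gamma_3$ is false.} Running the plan gives the sharp value $\tfrac16$, not $\tfrac18$. The extremal function just described lies in $\mathcal{S}^{\ast}_{B}$ and has $|\gamma_3|=\tfrac16>\tfrac18$, so $|\gamma_3|\le\tfrac18$ cannot hold. (For $f_1$ one gets only $\gamma_3=\tfrac1{18}$, so $f_1$ does not witness the maximum.) I would therefore prove the theorem with $|\gamma_3|\le\tfrac16$, sharp for the function above, and flag $\tfrac18$ as an error in the statement.
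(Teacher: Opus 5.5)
Your proposal is correct, including the objection: the bound $|\gamma_3|\le\frac18$ in the statement is false, and the sharp bound is $\frac16$. The paper's own formula $\gamma_3=\frac1{144}(p_1^3-6p_1p_2+12p_3)$ confirms this. Taking $p(z)=(1+z^3)/(1-z^3)$, which is your $w(z)=z^3$, gives $p_1=p_2=0$, $p_3=2$, hence $\gamma_3=\frac{24}{144}=\frac16$. The paper proves this part only by appealing to Lemma~\ref{lem:3} and naming $f_1,f_2$ as extremals. But $f_1$ gives $\gamma_3=\frac1{18}$ and $f_2$ gives $\gamma_3=0$, so neither the bound nor the sharpness claim holds. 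The natural use of Lemma~\ref{lem:3}, namely $|144\gamma_3|\le 12|p_3-\tfrac12p_1p_2|+|p_1|^3\le 32$, yields only the non-sharp bound $\frac29$.

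Your method also differs from the paper's.
\begin{itemize}
\item The paper passes through the $a_n$ and the Carath\'eodory coefficients, then applies the inequalities of Lemma~\ref{lem:3}. For $\gamma_1,\gamma_2$ this works and matches your results; for example, $|4p_2-p_1^2|\le 8-|p_1|^2$ follows from the third inequality.
\item You read off $\gamma_n$ directly from $\sum 2n\gamma_nz^n=B(w(z))-1$ and then use the Schwarz-coefficient bounds of Lemma~\ref{lem:5}.
\end{itemize}
The advantage of your route is that the bound on $|b_3|$ already encodes its dependence on $b_1,b_2$. A single optimization, concave in $y$ over $0\le y\le 1-x^2$, then settles $\gamma_3$ sharply. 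The Lemma~\ref{lem:3} route loses this coupling; to get the sharp constant it would need the full parametrization of Lemma~\ref{lem:1} plus a comparable optimization.

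Your computations all check out:
\begin{itemize}
\item $y^*=x(1+x)/2$ is admissible exactly when $x\le\frac23$;
\item the resulting value $1-\frac34x^2+\frac7{12}x^3$ is decreasing on $[0,\frac23]$;
\item the endpoint value satisfies $2x-\frac53x^3\le\frac{68}{81}$ on $[\frac23,1]$.
\end{itemize}
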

\begin{proof}
    Let $f \in \mathcal{S}^{\ast}_{B}$. Substituting \eqref{a2},\eqref{a3} and \eqref{a4} into \eqref{log coeff}, we obtain
    \[\gamma_{1}=\dfrac{1}{4}p_{1},\quad \gamma_{2}=-\dfrac{1}{32}(p_1^2-4p_{2}), \quad \gamma_3=\dfrac{1}{144}\left(p_{1}^{3}-6p_{1}p_{2}+12p_{3}\right).\]
Applying Lemma~\ref{lem:3}, we obtain
\begin{equation}
    |\gamma_1|\le1/2, \quad |\gamma_2|\leq 1/4, \quad |\gamma_3| \leq 1/8.\nonumber
\end{equation}
These bounds are sharp, and equality is achieved for the functions $f_1(z)$ and $f_2(z) \in \mathcal{S}^{\ast}_{B}$, given by \eqref{1.45} and \eqref{1.466}, respectively. 
\end{proof}
The second-order Hankel determinants defined for the initial coefficients, in \eqref{h21 and h22}, through modification of the coefficients, the second Hankel determinant can be computed for different values of the coefficients. For instance, using equation \eqref{log coeff} in \eqref{1.10}, we can derive the Hankel determinant for logarithmic coefficients, given by
\begin{equation}
    \mathcal{H}_{2,1}(F_{f}/2) = \begin{vmatrix}
\gamma_1 & \gamma_{2} \\
\gamma_{2} & \gamma_{3}
\end{vmatrix} = \gamma_1 \gamma_3-\gamma_2^2 = \dfrac{1}{4}\left(a_2 a_4- a_3^2+\dfrac{1}{12}a_2^4\right).\label{3.11}
\end{equation}
It is important to note that $|\mathcal{H}_{2,1}(F_{f}/2)|$ remains invariant under rotation. For instance, considering the function $f_\theta(z) := e^{-i \theta} f(e^{i \theta} z)$, where $f \in \mathcal{S}$ and $\theta \in \mathbb{R}$, the following relation is derived:
\begin{equation}
    \mathcal{H}_{2,1} \left( F_{f_\theta} / 2 \right) = \dfrac{e^{4i\theta}}{4} \left(a_2 a_4- a_3^2+\dfrac{1}{12}a_2^4\right)=e^{4i\theta}\mathcal{H}_{2,1} \left( F_{f}/ 2 \right). \label{h21log}
\end{equation}
We now proceed to determine the sharp bounds of the second-order Hankel determinant corresponding to the logarithmic coefficients for $f \in \mathcal{S}^{\ast}_{B}$. 
\begin{theorem}
Let $f \in \mathcal{S}^{\ast}_{B}$. Then
\begin{equation}
    |\mathcal{H}_{2,1}(F_{f}/2)| \leq \dfrac{1}{16}. \label{eq:2.1}
\end{equation} 
This inequality is sharp.
\end{theorem}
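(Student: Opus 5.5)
Because $|\mathcal{H}_{2,1}(F_f/2)|$ is rotation invariant by \eqref{h21log}, I will reduce to $p_1\ge 0$ and then follow the scheme used for $\mathcal{H}_{2,2}(f)$: express everything through Lemma~\ref{lem:1}, bound by the triangle inequality in $\zeta_3$, and apply Lemma~\ref{lem:2}.

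\textbf{Step 1: the functional in terms of $p_n$.} Substituting \eqref{a2}, \eqref{a3} and \eqref{a4} into \eqref{3.11}, I compute
\[
a_2a_4=\frac{4p_1^4+48p_1^2p_2+192p_1p_3}{2304},\qquad a_3^2=\frac{9p_1^4+72p_1^2p_2+144p_2^2}{2304},\qquad \frac{a_2^4}{12}=\frac{12p_1^4}{2304}.
\]
Combining these gives
\[
\mathcal{H}_{2,1}(F_f/2)=\frac{1}{9216}\left(7p_1^4-24p_1^2p_2+192p_1p_3-144p_2^2\right).
\]
This should be double-checked against the sign conventions used in \eqref{h22}.

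\textbf{Step 2: the form in $\zeta_1,\zeta_2,\zeta_3$.} Write $\zeta_1=c\in[0,1]$, $t=1-c^2$ and $\zeta=\zeta_2$, and insert Lemma~\ref{lem:1}. Collecting terms, I expect
\[
|\mathcal{H}_{2,1}(F_f/2)|=\frac{1}{9216}\Big|112c^4+192c^2t\zeta-192t(3+c^2)\zeta^2+768ct(1-|\zeta|^2)\zeta_3\Big|.
\]

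\textbf{Step 3: the endpoint cases.}
\begin{itemize}
\item At $c=0$ the expression reduces to $|\zeta|^2/16\le 1/16$. Equality holds for $\zeta_2=1$, i.e.\ $p_1=0$, $p_2=2$, which corresponds to $f_2$ in \eqref{1.466}. This gives sharpness.
\item At $c=1$ the value is $112/9216=7/576<1/16$.
\end{itemize}

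\textbf{Step 4: the interior case $c\in(0,1)$.} Using $|\zeta_3|\le 1$, I get
\[
|\mathcal{H}_{2,1}(F_f/2)|\le \frac{ct}{12}\,\Psi(A,B,C),
\]
where $\Psi$ is as in the proof for $\mathcal{H}_{2,2}(f)$ and
\[
A=\frac{7c^3}{48t},\qquad B=\frac{c}{4},\qquad C=-\frac{3+c^2}{4c}.
\]
Here $AC<0$, so Case 2 of Lemma~\ref{lem:2} applies. The first two subcases are excluded:
\begin{itemize}
\item $|C|\ge 1$, because $3+c^2\ge 4c$ on $[0,1]$; hence $|B|<2(1-|C|)$ fails.
\item $C^{-2}-1\le 0$, so $-4AC(C^{-2}-1)\le 0$ and $B^2<-4AC(C^{-2}-1)$ also fails.
\end{itemize}
So we are in $R(A,B,C)$.

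\textbf{Step 5: the subcases of $R$.} I expect the subcase $|AB|\le |C|(|B|-4|A|)$ to hold on an initial interval $(0,c_0]$. There $R=-|A|+|B|+|C|$, and the bound becomes
\[
\frac{t(3+2c^2)}{48}-\frac{7c^4}{576}.
\]
This equals $1/16$ at $c=0$ and is decreasing, since its derivative is $-c^3/12-7c^3/144<0$. On the remaining interval I will determine which of the other two branches applies. I will then verify that
\[
\frac{ct}{12}(|A|+|B|-|C|)\qquad\text{or}\qquad \frac{ct}{12}(|C|+|A|)\sqrt{1-\frac{B^2}{4AC}}
\]
stays below $1/16$. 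This is a one-variable polynomial or radical inequality in $c$, which I will handle by squaring and checking sign on the relevant interval.

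\textbf{Main obstacle.} The hardest step is this last one: locating the switching points between the branches of $R(A,B,C)$ and proving the sqrt-branch bound is at most $1/16$. If the branch conditions prove messy, I will fall back on the crude bound $|A|+|B|+|C|$ there. That gives
\[
\frac{t(3+2c^2)}{48}+\frac{7c^4}{576},
\]
which is maximised in $c^2$ at $c^2=24/17$, outside $[0,1]$, and so is increasing on $[0,1]$. Thus it exceeds $1/16$ for every $c>0$, and the fallback can only be used after first checking numerically that the branch switch occurs at a $c$ where this crude bound is still $\le 1/16$.
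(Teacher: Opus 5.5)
Your argument is correct and follows the paper's route: rotate to $\zeta_1=c\in[0,1]$, apply Lemma~\ref{lem:1}, use the triangle inequality in $\zeta_3$ to get $\frac{ct}{12}\Psi(A,B,C)$, exclude the first two subcases of Case~2 of Lemma~\ref{lem:2}, take the branch $-|A|+|B|+|C|$ on $(0,c_0]$ with $c_0^2=\frac{6}{47}(8\sqrt{2}-9)$, and take the square-root branch on $(c_0,1)$. The step you left open goes through: there the bound equals $\frac{36-24c^2-5c^4}{288}\sqrt{\frac{6+c^2}{7(3+c^2)}}$, which is decreasing and about $0.0517<\frac{1}{16}$ at $c_0$.

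Your fallback remark, however, is miscomputed. In fact $\frac{t(3+2c^2)}{48}+\frac{7c^4}{576}=\frac{36-12c^2-17c^4}{576}$, which is decreasing and at most $\frac{1}{16}$ on all of $[0,1]$. Moreover, since $|C|=\frac{3+c^2}{4c}>1$, the estimate $\Psi\le|A|+|B|+|C|$ follows from the triangle inequality alone, because $r\mapsto|B|r+(|C|-1)r^2$ is increasing on $[0,1]$. So this crude bound already proves the theorem for every $c$, and the branch analysis of Lemma~\ref{lem:2} that you called the main obstacle is not needed.
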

\begin{proof} 
Let $f \in \mathcal{S}^{\ast}_{B}$. Since the class $\mathcal{S}^{\ast}_{B}$ is invariant under rotation and
$\mathcal{H}_{2,1}(F_f/2)$ is given by \eqref{h21log}, it follows that
$\lvert \mathcal{H}_{2,1}(F_f/2) \rvert$ is also rotationally invariant.
Therefore, without loss of generality, we may assume that $a_2 \ge 0$.
Consequently, by \eqref{a2}, we have $p_1 \ge 0$, which, in view of \eqref{l1},
implies that $\zeta_1 \in [0,1]$. Hence, upon substituting \eqref{a2},
\eqref{a3}, and \eqref{a4} into \eqref{3.11}, we obtain
\begin{eqnarray}
    \mathcal{H}_{2,1}(F_f/2)&=&\left(\dfrac{1}{2} a_{2}\right)\left(\dfrac{1}{2}\left(a_{4}-a_{2}a_{3}+\dfrac{1}{3}a_{2}^{3}\right)\right)-\left(\dfrac{1}{2}\left(a_{3}-\dfrac{1}{2}a_{2}^{2}\right)\right)^{2}\nonumber\\
    &=&\dfrac{1}{4}\left(a_2 a_4- a_3^2+\dfrac{1}{12}a_2^4\right)\nonumber\\
    &=&\dfrac{1}{9216}\left(7p_1^4-24p_1^2p_2-144p_2^2+192p_1p_3\right)\label{3.9}
\end{eqnarray}
Now, applying Lemma \ref{lem:1} to \eqref{3.9}, we get 
\begin{align}
\mathcal{H}_{2,1}(F_f/2)
&=\dfrac{1}{576}\Big(12\zeta_2^2(\zeta_1^4+2\zeta_1^2-3)
+ 12\zeta_1^2\zeta_2(1 - \zeta_1^2)+ \notag\\
& \quad \quad 7\zeta_1^4+48\zeta_1(1-\zeta_1^2)(1-|\zeta_2|^2)\zeta_3
\Big) \label{eq:3.12}
\end{align}
\begin{enumerate}
\item Since $|\zeta_3|\le 1$, from \eqref{eq:3.12}, we have the following cases for $\zeta_{1} = 0$ and $\zeta_{1} = 1$:
\[|\mathcal{H}_{2,1}(F_f/2)|=  \left\{ \begin{array}{ll}
\dfrac{|\zeta_2|^2}{16}\leq \dfrac{1}{16},  &\zeta_{1}=0,\\ \\
\dfrac{7}{576}, &\zeta_{1}=1.
\end{array} \right.
\]
  \item When $\zeta_{1} \in (0,1)$, since $|\zeta_{3}| \leq 1$, applying the triangle inequality to \eqref{eq:3.12} gives
\begin{align}
    |\mathcal{H}_{2,1}(F_f/2)|
    &\leq \dfrac{1}{576}\big|12\zeta_2^2(\zeta_1^4+2\zeta_1^2-3)
    +12\zeta_1^2\zeta_2(1-\zeta_1^2) +\notag \\
   &\quad \quad 7\zeta_1^4+48\zeta_1(1-\zeta_1^2)(1-|\zeta_2|^2)\zeta_3\big| \notag\\
    &= \dfrac{1}{7} \zeta_{1}(1-\zeta_{1}^{2})\Psi(A,B,C) \label{eq:2.5}
\end{align}
where 
\begin{eqnarray*}
        \Psi(A,B,C)&:=& |A+B\zeta_2+C\zeta_2 ^2| + 1-|\zeta_2| ^2,
\end{eqnarray*}
and
\[
    A=\dfrac{7\zeta_{1}^{3}}{48(1-\zeta_{1}^2)},\quad
    B=\dfrac{\zeta_1}{4},\quad
    C=-\dfrac{3+\zeta_{1}^{2}}{4\zeta_{1}}.
\]
Since $AC < 0$, by applying \textit{Case 2} of Lemma~\ref{lem:2}, we proceed as follows.  
We define
\[
T_2(\zeta_1) := -4AC\left(\frac{1}{C^2} - 1\right) - B^2 = -\frac{\zeta_1^2(18-\zeta_1^2)}{12(3 + \zeta_1^2)} \leq 0,
\]
which gives
\[
-4AC\left(\frac{1}{C^2} - 1\right) \leq B^2.
\]

\begin{enumerate}[label=\Alph*.]
    \item For each $\zeta_1 \in (0,1)$
    \[T_1(\zeta_1):=|B|-2(1-|C|)=\dfrac{3}{2\zeta}+\dfrac{3\zeta_1}{4}-2>0,\]
    implying $|B|>2(1-|C|).$
    Furthermore,
    \[T_2(\zeta_1):=-4AC\left(\frac{1}{C^2}-1\right)-B^2=-\dfrac{\zeta_1^2(18-\zeta_1^2)}{12(3+\zeta_1^2)}\leq 0,\]
    which gives $-4AC\left(\frac{1}{C^2}-1\right)\leq B^2.$
    Thus, $T_1(\zeta_1) \cap T_2(\zeta_1)=\emptyset$, and this case does not occur for any $\zeta_{1} \in (0,1)$, as stated in Lemma~\ref{lem:2}.
    \item  For $\zeta_{1} \in (0,1)$, we have
    \[T_3(\zeta_1):=4(1+|C|)^2=\dfrac{(3+4\zeta_1+\zeta_1^2)^2}{4\zeta_1^2}>0,\]
     \[T_4(\zeta_1):=-4AC\left(\frac{1}{C^2}-1\right)=-\dfrac{7\zeta_1^2(9-\zeta_1^2)}{48(3+\zeta_1^2)}<0.\]
     Therefore, $\min\{T_3(\zeta_1),T_4(\zeta_1)\}= T_4(\zeta_1)$. Since $-4AC\left(\frac{1}{C^{2}} - 1\right) \leq B^{2}$, this case is also not valid for any $\zeta_{1} \in (0,1)$.
    \item Considering 
    \[T_5(\zeta_1):=|A B|-|C|(|B|+4|A|)=-\dfrac{12+20\zeta_1^2+3\zeta_1^4}{64(1-\zeta_1^2)}<0,\]
    we get $|AB|<|C|(|B|+4|A|)$, implying this case is impossible for $\zeta_{1} \in (0,1)$.
     \item Finally, define
     \[
     T_6(\zeta_1):= |A B|-|C|(|B|-4|A|)=-\dfrac{36-108\zeta_1^2-47\zeta_1^4)}{192(1-\zeta_1^2)} \leq 0.\nonumber
     \]
    which holds for $0 < \zeta_1 \le \zeta' = \left(\sqrt{\dfrac{6}{47}(8\sqrt{2}-9})\right)$.
    \end{enumerate}
Hence, by Lemma \ref{lem:2}, 
\[\Psi (A,B,C) \le |A|+|B|+|C|.\] 
Using this in \eqref{eq:2.5}, we get 
\begin{eqnarray*}
    |\mathcal{H}_{2,1}(F_f/2)| &\leq& \dfrac{1}{7} \zeta_{1}(1-\zeta_{1}^{2})\big(|A|+|B|+|C|\big)\\
    &=&\dfrac{1}{576}(36-12\zeta_1^2-31\zeta_1^4)\\
    &\le& \dfrac{1}{16}\approx0.0625.
\end{eqnarray*}

For $\zeta' < \zeta_{1} < 1$, applying Lemma~\ref{lem:2} again yields
    \begin{eqnarray*}
        |\mathcal{H}_{2,1}(F_f/2)|&\leq& \dfrac{1}{12} \zeta_1(1-\zeta_1^2) \left(|C|+|A|\right)\sqrt{1-\dfrac{B^2}{4AC}}\\
          &=&\dfrac{1}{168} \sqrt{\dfrac{6+\zeta_1^2}{21 + 7\zeta_1^2}} \left( 5 \zeta_1^4+24\zeta_1^2 -36\right) := \phi_2(\zeta_1)
    \end{eqnarray*}
    For $\zeta_{1} \in (\zeta', 1)$, we find that
        \[
        \phi_2(\zeta_1) \le0.0516512 \le \frac{1}{16} \quad \text{at  }\; \zeta_1=\sqrt{\frac{6}{47}(8\sqrt{2}-9)}.\]
        \end{enumerate}
Therefore, it follows that the inequality \eqref{eq:2.1} holds. In lemma \ref{lem:1}, on replacing $p_1=p_3=0$ and $p_2=2$. The corresponding extremal function $f\in\mathcal{S}^{\ast}_{B}$ described as 
\begin{equation}
    \dfrac{zf'(z)}{f(z)}=\dfrac{1}{1-\log(2p(z)/(p(z)+1))} \label{zf'(z)/f(z)}
\end{equation}
Where $p(z)$ is given in \eqref{p(z)} with $\zeta_{1}=0$ and $\zeta_{2}=1$, we get $p(z)=(1+z^2)/(1-z^2)$.  On solving \eqref{zf'(z)/f(z)}, we get the function~\eqref{1.466}.

\end{proof}
We now proceed to establish the bounds for the Second-order Hankel determinants, where the entries are the logarithmic coefficients of the inverse of $f\in\mathcal{S}^{\ast}_{B}$. 
Using the equation \eqref{loginvcoef123} in \eqref{1.13}, the logarithmic coefficients of the inverse functions are derived as follows:
\begin{equation}
    \mathcal{H}_{2,1}(F_{f^{-1}}/2) = \begin{vmatrix} 
    \Gamma_1 & \Gamma_2 \\
    \Gamma_2 & \Gamma_3 
    \end{vmatrix} = \Gamma_1 \Gamma_3 - \Gamma_2^2 = \dfrac{1}{48} \left( 13 a_2^4 - 12 a_2^2 a_3 - 12 a_3^2 + 12 a_2 a_4 \right). \label{inv coeff}
\end{equation}

Similarly, we can verify that $\lvert \mathcal{H}_{2,1}(F_{f^{-1}}/2) \rvert$ is also invariant under rotation. Indeed, for the rotated function $f_\theta(z) := e^{-i\theta} f(e^{i\theta} z)$, where $f \in \mathcal{S}$ and
$\theta \in \mathbb{R}$, we obtain
\begin{equation}
\mathcal{H}_{2,1}(F_{f_\theta^{-1}}/2)
= \dfrac{e^{4i\theta}}{48}
\left( 13 a_2^4 - 12 a_2^2 a_3 - 12 a_3^2 + 12 a_2 a_4 \right)
= e^{4i\theta}\mathcal{H}_{2,1}(F_{f^{-1}}/2).
\label{h21inverselog}
\end{equation}

In the following, we obtain the sharp bounds of the second-order Hankel determinant related to the logarithmic coefficients of the inverse function for functions $f$ belonging to the class $\mathcal{S}^{\ast}_{B}$:
\begin{theorem}
Let $f \in \mathcal{S}^{\ast}_{B}$. Then
    \begin{equation}
        |\mathcal{H}_{2,1}(F_{f^{-1}}/2)| \leq \dfrac{43}{576}. \label{eq:3.1}
    \end{equation} 
This inequality is sharp.   
\end{theorem}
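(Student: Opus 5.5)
We would follow the same scheme as the proof for $\mathcal{H}_{2,1}(F_f/2)$. By \eqref{h21inverselog}, $|\mathcal{H}_{2,1}(F_{f^{-1}}/2)|$ is rotation invariant, so we may assume $a_2\ge 0$. Then $p_1\ge0$, and by \eqref{l1} this means $\zeta_1=:t\in[0,1]$. Substituting \eqref{a2}, \eqref{a3}, \eqref{a4} into \eqref{inv coeff} and clearing denominators gives
\[
\mathcal{H}_{2,1}(F_{f^{-1}}/2)=\frac{1}{9216}\left(115p_1^4-168p_1^2p_2-144p_2^2+192p_1p_3\right).
\]
Next we insert the representations \eqref{l1}--\eqref{l3} of Lemma~\ref{lem:1}, writing $\zeta_2=x$ and $\zeta_3=y$. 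Collecting terms, the expression should reduce to
\[
\mathcal{H}_{2,1}(F_{f^{-1}}/2)=\frac{1}{12}\Big(\tfrac{43}{48}t^4-\tfrac54 t^2(1-t^2)x-\tfrac14(1-t^2)(3+t^2)x^2+t(1-t^2)(1-|x|^2)y\Big).
\]
We would double-check this reduction carefully, since every later step depends on it.

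The boundary cases in $t$ are immediate. For $t=1$ only the first term survives, giving exactly $43/576$. For $t=0$ we get $|x|^2/16\le 1/16<43/576$.

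For $t\in(0,1)$ we use $|y|\le1$ and the triangle inequality to obtain
\[
|\mathcal{H}_{2,1}(F_{f^{-1}}/2)|\le \frac{1}{12}\,t(1-t^2)\,\Psi(A,B,C),
\]
with
\[
A=\frac{43t^3}{48(1-t^2)},\qquad B=-\frac{5t}{4},\qquad C=-\frac{3+t^2}{4t}.
\]
Here $AC<0$, so Case 2 of Lemma~\ref{lem:2} applies. Since $|C|>1$ on $(0,1)$, we have $C^{-2}-1<0$ and hence $-4AC(C^{-2}-1)<0\le B^2$. This rules out the second sub-case. The first sub-case requires $|B|<2(1-|C|)<0$, which is impossible. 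So we land in $R(A,B,C)$.

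For $R(A,B,C)$ we would compute the sign of $|AB|-|C|(|B|+4|A|)$ and of $|AB|-|C|(|B|-4|A|)$ as rational functions of $t$, exactly as in the previous theorem. This locates a threshold $t'\in(0,1)$, and we treat the two ranges of $t$ separately.

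On the range where $R=|A|+|B|+|C|$ or $R=-|A|+|B|+|C|$, the bound $\tfrac1{12}t(1-t^2)R$ is an explicit polynomial in $t$. For instance,
\[
\frac1{12}\,t(1-t^2)\big(|A|+|B|+|C|\big)=\frac{1}{576}\left(36-48t^2+55t^4\right)\ \ (\text{to be rechecked}),
\]
which is convex in $t^2$. Its maximum on $[0,1]$ is therefore at an endpoint, namely $43/576$ at $t=1$.

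On the remaining range we get $R=(|A|+|C|)\sqrt{1-B^2/(4AC)}$. This produces a function $\phi(t)$ of the form (polynomial)$\times\sqrt{\text{rational}}$, and we must show $\phi(t)\le 43/576$ there. That one-variable estimate, together with correctly identifying which sub-case of $R$ holds on which interval, is the main obstacle. We would first try elementary monotonicity: square both sides and reduce the comparison to a polynomial inequality in $t^2$. If that is messy, we would fall back on a numerical maximum, as the paper did for $\phi_2$.

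Sharpness: the case $t=1$ corresponds to $p(z)=(1+z)/(1-z)$, the unique function in \eqref{firstp(z)} with $\zeta_1=1$. That is $w(z)=z$, i.e. the extremal function $f_1$ of \eqref{1.45}. Directly, with $a_2=1$, $a_3=3/4$, $a_4=19/36$ we get
\[
\frac{1}{48}\left(13-9-\frac{27}{4}+\frac{19}{3}\right)=\frac{43}{576}.
\]
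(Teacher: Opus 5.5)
Your setup matches the paper's route and is correct as far as it goes. The reduction to $\frac{1}{576}\big(43t^4-60t^2(1-t^2)x-12(1-t^2)(3+t^2)x^2+48t(1-t^2)(1-|x|^2)y\big)$ is right, as are the values of $A,B,C$, the endpoint cases and the sharpness via $f_1$. Your disposal of the first two sub-cases of Case 2 through $|C|>1$ is cleaner than the paper's.

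The genuine gap is in your treatment of the ``polynomial range''.
\begin{itemize}
\item $|A|+|B|+|C|$ is not one of the values of $R(A,B,C)$; it is the Case 1 value. The first branch of $R$ is $|A|+|B|-|C|$, and it never occurs here, since $|AB|-|C|(|B|+4|A|)=-\frac{180+396t^2-103t^4}{192(1-t^2)}<0$.
\item Your polynomial is miscomputed. In fact $\frac1{12}t(1-t^2)(|A|+|B|+|C|)=\frac1{576}(36+36t^2-29t^4)$.
\item This polynomial is concave in $t^2$ and reaches $\frac{1368}{29\cdot 576}\approx 0.082>\frac{43}{576}$ at $t^2=18/29$. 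It even slightly exceeds $43/576$ already at $t=t'$.
\end{itemize}
So the crude bound $\Psi\le|A|+|B|+|C|$, although valid because $|C|>1$, cannot prove the theorem. Your convexity/endpoint argument for it is therefore unfounded.

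What actually works is the second branch of $R$. The condition $|AB|\le|C|(|B|-4|A|)$ is equivalent to $149t^4+212t^2-60\le0$, that is, $0<t\le t'$ with $t'^2=(4\sqrt{1261}-106)/149\approx0.242$. On this range $R=-|A|+|B|+|C|$, and the bound becomes $\frac1{576}(36+36t^2-115t^4)\le\frac{31}{460}<\frac{43}{576}$. Its maximum is at the interior point $t^2=18/115$, not at $t=1$; in any case this bound holds only on $(0,t']$.

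The value $43/576$ comes solely from the range $(t',1)$. There $R=(|A|+|C|)\sqrt{1-B^2/(4AC)}$ gives $\phi(t)=\frac{36-24t^2+31t^4}{288\sqrt{43}}\sqrt{\frac{51-8t^2}{3+t^2}}$, which tends to $43/576$ as $t\to1$.

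Your squaring idea closes this step cleanly. With $s=t^2$, the inequality $\phi(t)\le 43/576$ is equivalent to $43^3(3+s)-4(36-24s+31s^2)^2(51-8s)\ge0$. The left side factors as $(1-s)q(s)$, where $q(s)=-30752s^4+212908s^3-180500s^2+447628s-25863\ge 236376s-25863>0$ for $s\ge t'^2$. This gives an elementary proof of the step the paper only checks numerically.
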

\begin{proof}
Let $f \in \mathcal{S}^{\ast}_{B}$. In view of the rotational invariance of the class $\mathcal{S}^{\ast}_{B}$ and from \eqref{h21inverselog}, $\lvert \mathcal{H}_{2,1}(F_{f^{-1}}/2) \rvert$ is rotationally invariant. Accordingly, without loss of generality, we assume that $a_2 \ge 0$. It then follows from \eqref{a2} that $p_1 \ge 0$, and hence, by \eqref{l1}, $\zeta_1 \in [0,1]$. Substituting \eqref{a2}, \eqref{a3}, and \eqref{a4} into
\eqref{inv coeff}, we obtain

\begin{eqnarray} 
    \mathcal{H}_{2,1}(F_{f^{-1}}/2)&=& \Gamma_{1}\Gamma_{3}-\Gamma_{2}^{2} \nonumber\\
    &=&\dfrac{1}{48}\left(13a_2^4-12a_2^2a_3-12a_3^2+12a_2a_4\right) \nonumber\\
    &=&\dfrac{1}{9216}\left(115p_1^4-168p_1^2p_2-144p_2^2+192p_1p_3\right) \label{111}
\end{eqnarray}
Applying Lemma~\ref{lem:1} in \eqref{111}, we obtain
\begin{equation}
    \mathcal{H}_{2,1}(F_{f^{-1}}/2)=\dfrac{1}{576}(12\zeta_2^2(\zeta_1^4+2\zeta_1^2-3)+60\zeta_1^2(\zeta_1^2-1)\zeta_2+43\zeta_1^4+48\zeta_1\zeta_3(1-\zeta_1^2)(1-|\zeta_2|^2)) \label{eq:3.13}
\end{equation}
\begin{enumerate}
\item Since $|\zeta_3|\le 1$, from \eqref{eq:3.13}, we have the following inequality for $\zeta_{1} = 0$ and $\zeta_{1} = 1$:
\[|\mathcal{H}_{2,1}(F_{f^{-1}}/2)|=  \left\{ \begin{array}{ll}
\dfrac{|\zeta_2|^2}{16}\leq \dfrac{1}{16},  &\zeta_{1}=0,\\ \\
\dfrac{43}{576}, &\zeta_{1}=1.
\end{array}\right.
\]
\item For $\zeta_1 \in (0,1)$ and $|\zeta_3|\le 1$, applying the triangle inequality to \eqref{eq:3.13} yields
\begin{equation}
    |\mathcal{H}_{2,1}(F_{f^{-1}}/2)| \leq \dfrac{1}{12} \zeta_{1}(1-\zeta_{1}^{2})\; \Psi(A,B,C), \label{eq:3.5}
\end{equation}
where 
\begin{equation*} 
    \Psi(A,B,C):= |A+B\zeta_2+C\zeta_2 ^2| + 1-|\zeta_2|^2,
\end{equation*}
and
\begin{equation}
    A=\dfrac{43\zeta_1^3}{48(1-\zeta_1^2)},\; B=-\dfrac{5}{4}\zeta_1, \;C=-\dfrac{3+\zeta_1^2}{4\zeta_1}. \label{eq:3.77}
\end{equation}
Since $AC < 0$, we analyze the following subcases based on Lemma~\ref{lem:2}:
\begin{enumerate}[label=\Alph*.]
    \item For each $\zeta_1 \in (0,1)$,
    \[T_1(\zeta_1):=|B|-2(1-|C|)=\dfrac{3}{2\zeta_1}+\dfrac{7\zeta_1}{4}-2>0,\]
    implying that $|B|>2(1-|C|)$.
    Furthermore,
    \[T_2(\zeta_1):=-4AC\left(\frac{1}{C^2}-1\right)-B^2=-\dfrac{\zeta_1^2(153+8\zeta_1^2)}{12(3+\zeta_1^2)}\leq 0,\]
    implying
    \[-4AC\left(\frac{1}{C^2}-1\right)\leq B^2.\]
    Since $T_1(\zeta_1) \cap T_2(\zeta_1)=\emptyset$. this subcase does not occur for any $\zeta_1 \in (0,1)$.
    
    \item For $\zeta_1 \in (0,1)$, 
    \[T_3(\zeta_1):=4(1+|C|)^2=\dfrac{(3+4\zeta_1+\zeta_1^2)^2}{4\zeta_1^2}>0,\]
     \[T_4(\zeta_1):=-4AC\left(\frac{1}{C^2}-1\right)=-\dfrac{43\zeta_1^2(9-\zeta_1^2)}{48(3+\zeta_1^2)}<0.\]
     Thus, $\min\{T_3(\zeta_1),T_4(\zeta_1)\}= T_4(\zeta_1)$.
    From above subcase, we already know that $-4AC\left(\frac{1}{C^2}-1\right)\leq B^2$, so this subcase also does not occur.
    
    \item For $\zeta_1 \in (0,1)$, 
    \[T_5(\zeta_1):=|A B|-|C|(|B|+4|A|)=-\dfrac{180+396\zeta_1^2-103\zeta_1^4}{192(1-\zeta_1^2)}<0.\]
    implying $|AB|<|C|(|B|+4|A|)$, hence, this subcase is also not possible.
    
     \item For $\zeta_1 \in (0,1)$, we take 
     \[T_6(\zeta_1):= |A B|-|C|(|B|-4|A|)=-\dfrac{60-212\zeta_1^2-149\zeta_1^4}{64(1-\zeta_1^2)} \leq 0.\]
    which holds for
    \[0 < \zeta_1 \le \zeta' = \sqrt{\frac{2}{149}(2\sqrt{1261}-53})\approx 0.491827.\]
    \end{enumerate}
    \end{enumerate}
Therefore, by Lemma \ref{lem:2},
\[\Psi (A,B,C) \le -|A|+|B|+|C|.\] 
Substituting this into \eqref{eq:3.13}, we obtain
\begin{eqnarray*}
    |\mathcal{H}_{2,1}(F_{f^{-1}}/2)| &\leq& \dfrac{1}{12} \zeta_{1}(1-\zeta_{1}^{2})\left(-|A|+|B|+|C|\right)\\
    &=&\dfrac{1}{576}(36+36\zeta_1^2-115\zeta_1^4)\le \dfrac{31}{460}\approx0.0673913.
\end{eqnarray*}
Applying Lemma \ref{lem:2} for $\zeta'<\zeta_1<1$, we get
    \begin{eqnarray*}
         |\mathcal{H}_{2,1}(F_{f^{-1}}/2)|&\leq& \dfrac{1}{12} \zeta_1(1-\zeta_1^2) \big(|C|+|A|\big)\sqrt{1-\dfrac{B^2}{4AC}}\\
          &=&\dfrac{1}{288\sqrt{43}} \sqrt{\dfrac{51-8\zeta_1^2}{3 + \zeta_1^2}} \left( 36 -24\zeta_1^2 +31\zeta_1^4 \right) = :\phi_2(\zeta_1)
    \end{eqnarray*}
        For $\zeta_1 \in (\zeta', 1)$, we find $\phi_2(\zeta_1) \le \dfrac{43}{576}\approx0.0746528$.\\
Hence, inequality \eqref{eq:3.1} holds. In lemma \ref{lem:1}, on replacing $p_1=p_2=p_3=0$. The corresponding extremal function $f\in\mathcal{S}^{\ast}_{B}$ described as 
\begin{equation}
    \dfrac{zf'(z)}{f(z)}=\dfrac{1}{1-\log(2p(z)/(p(z)+1))} \label{zf'(z)/f(z)forinverse}
\end{equation}
Where $p(z)$ is given in \eqref{firstp(z)} with $\zeta_{1}=1$, we get $p(z)=(1+z)/(1-z)$.  On solving \eqref{zf'(z)/f(z)forinverse}, we get the function~\eqref{1.45}.
\end{proof}

\section{\hspace{5pt} Toeplitz Determinant}
We now establish the bounds for second order Toeplitz determinant associated with initial coefficients for $f \in \mathcal{S}^{\ast}_{B}$:
\begin{theorem}
Let $f \in \mathcal{S}^{\ast}_{B}$. Then
\begin{equation}
   |\mathcal{T}_{2,1}(f)| \leq 2.\nonumber
\end{equation} 
This inequality is sharp.
\end{theorem}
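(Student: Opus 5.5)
The bound follows from the definition \eqref{t22}, $\mathcal{T}_{2,1}(f)=1-a_2^2$, together with the sharp estimate $|a_2|\le 1$ from the first theorem of this section. I will apply the triangle inequality and then exhibit an extremal function among the rotations of $f_1$.

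\textbf{Step 1 (the bound).} For $f\in\mathcal{S}^{\ast}_{B}$, relation \eqref{a2} gives $a_2=\tfrac12 p_1$. Lemma~\ref{lem:3} gives $|p_1|\le 2$, so $|a_2|\le 1$, which is the first inequality of the coefficient theorem. Hence
\[
|\mathcal{T}_{2,1}(f)|=|1-a_2^2|\le 1+|a_2|^2\le 2 .
\]

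\textbf{Step 2 (sharpness).} Equality in the triangle inequality requires $a_2^2=-1$, that is, $a_2=\pm i$. I will use the function $f_3$ of \eqref{1.477}, obtained from $\phi(t)=B(it)$. Since $\phi(z)=B(iz)$ is $B$ composed with the Schwarz function $w(z)=iz$, we have $f_3\in\mathcal{S}^{\ast}_{B}$. Its expansion gives $a_2=i$, so
\[
\mathcal{T}_{2,1}(f_3)=1-i^2=2 ,
\]
and the bound is attained. Equivalently, $f_3(z)=e^{-i\pi/2}f_1(e^{i\pi/2}z)$, a rotation of $f_1$, and the class is rotation invariant.

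\textbf{Expected obstacle.} Note that $|\mathcal{T}_{2,1}(f)|$ is \emph{not} rotation invariant, because the constant $1$ does not rotate along with $a_2^2$. So I cannot normalize $a_2\ge 0$ as in the earlier proofs. With that normalization the supremum would wrongly appear to be $1$. The only delicate point is therefore choosing the rotated extremal $f_3$, rather than $f_1$, to attain the value $2$.
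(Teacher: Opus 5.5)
Your proposal is correct and follows essentially the same route as the paper: the triangle inequality $|1-a_2^2|\le 1+|a_2|^2$ with $a_2=\tfrac12 p_1$ and $|p_1|\le 2$ from Lemma~\ref{lem:3}, and sharpness via $f_3$, for which $a_2=i$. Your added observation that $|\mathcal{T}_{2,1}(f)|$ is not rotation invariant, so one cannot normalize $a_2\ge 0$, is accurate and explains why the paper uses $f_3$ rather than $f_1$ for sharpness.
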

\begin{proof}
    Let $f \in \mathcal{S}^{\ast}_{B}$. Substituting \eqref{a2} into \eqref{t22} for $\mathcal{T}_{2,1}(f)$, we obtain
    \[
        |1-a_2^2|\le1+|a_2|^2=1+\dfrac{|p_1|^2}{4}
    \]
    Applying Lemma \eqref{lem:3}, which gives $|1-a_2^2|\leq2$. This bound is sharp, and equality is achieved for the function $f_3(z) \in \mathcal{S}^{\ast}_{B}$, defined in \eqref{1.477}.
\end{proof}
\begin{theorem}
Let $f \in \mathcal{S}^{\ast}_{B}$. Then
\begin{equation}
   |\mathcal{T}_{2,2}(f)| \leq \dfrac{25}{16}.\nonumber
\end{equation} 
This inequality is sharp.
\end{theorem}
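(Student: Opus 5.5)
Since $\mathcal{T}_{2,2}(f)=a_2^2-a_3^2$, the natural route is the triangle inequality $|a_2^2-a_3^2|\le |a_2|^2+|a_3|^2$ together with the sharp coefficient bounds already established for $\mathcal{S}^{\ast}_{B}$, namely $|a_2|\le 1$ and $|a_3|\le 3/4$. This gives immediately
\[
|\mathcal{T}_{2,2}(f)|\le 1+\frac{9}{16}=\frac{25}{16}.
\]
A slightly more careful version keeps the common parameter visible. We write $a_2=p_1/2$ and $a_3=(p_1^2+4p_2)/16$ from \eqref{a2} and \eqref{a3}. Lemma~\ref{lem:3} in the form $|p_2-\tfrac12 p_1^2|\le 2-\tfrac12|p_1|^2$ then gives
\[
|a_3|\le \frac{1}{16}\Big(3|p_1|^2/... \Big),
\]
and more cleanly $|a_3|\le \tfrac14(2+|p_1|^2/4)\cdot\tfrac12$. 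In either form, $|a_2|^2+|a_3|^2$ is increasing in $|p_1|\in[0,2]$, so it is maximised at $|p_1|=2$, where it equals $25/16$. However, the crude bound $|a_3|\le 3/4$ already suffices.

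The only real issue is sharpness, since the triangle inequality is tight only when $a_2^2$ and $-a_3^2$ point in the same direction. For $f_1$ we have $a_2=1$ and $a_3=3/4$, which gives only $1-9/16=7/16$. The rotated extremal $f_3$ from \eqref{1.477} has $a_2=i$ and $a_3=-3/4$. Then
\[
a_2^2-a_3^2=-1-\frac{9}{16}=-\frac{25}{16},
\]
so $|\mathcal{T}_{2,2}(f_3)|=25/16$. Since $\mathcal{S}^{\ast}_{B}$ is rotation invariant, $f_3\in\mathcal{S}^{\ast}_{B}$, and this establishes sharpness. The main thing to check is therefore just the coefficients of $f_3$, namely that $a_2=i$ and $a_3=-3/4$. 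These follow from $f_3(z)=-if_1(iz)$.
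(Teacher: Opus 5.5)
Your argument is correct and is essentially the paper's proof. Both use the triangle inequality $|a_2^2-a_3^2|\le |a_2|^2+|a_3|^2$ together with $|a_2|\le 1$ and $|a_3|\le 3/4$; the paper obtains these by applying Lemma~\ref{lem:3} to $p_1,p_2$. Both get sharpness from $f_3$, and your explicit check that $a_2=i$, $a_3=-3/4$ gives the value $-25/16$ is a useful addition. One blemish: your optional ``more careful'' paragraph is garbled, since the refined estimate should read $|a_3|\le \tfrac12+\tfrac{|p_1|^2}{16}$ and your expression is off by a factor of $2$, so it would not give $25/16$ at $|p_1|=2$. Nothing in your main argument depends on that paragraph.
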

\begin{proof}
Let $f \in \mathcal{S}^{\ast}_{B}$. Substituting \eqref{a2} and \eqref{a3} into \eqref{t22} for $\mathcal{T}_{2,2}(f)$, we have
\begin{eqnarray}
    \left|a_{3}^2-a_{2}^{2}\right|&\le&|a_3|^2+|a_2|^2\nonumber \\
    &\le&\dfrac{1}{256}\left|p_1^4+16p_2^2+8p_1^2p_2\right|+\dfrac{1}{4}|p_1|^2\nonumber
\end{eqnarray}
Applying Lemma \ref{lem:3}, we obtain
\[
|a_3^2-a_2^2|\leq\dfrac{25}{16}.
\]
This bound is sharp, and equality is achieved for the function $f_3(z)\in\mathcal{S}^{\ast}_{B}$, defined in \eqref{1.477}.
\end{proof}
\begin{theorem}
    Let $f \in \mathcal{S}^{\ast}_{B}$. Then
\begin{equation}
   |\mathcal{T}_{2,3}(f)| \leq \dfrac{545}{648}.\nonumber
\end{equation} 
This inequality is sharp.
\end{theorem}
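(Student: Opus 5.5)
Following the pattern of the two preceding Toeplitz results, I will bound $\mathcal{T}_{2,3}(f)=a_3^2-a_4^2$ with the triangle inequality, $|a_3^2-a_4^2|\le |a_3|^2+|a_4|^2$, and then apply the sharp coefficient bounds of the first theorem of the Hankel determinant section. From \eqref{a3} and \eqref{a4} together with Lemma~\ref{lem:3} that theorem gives $|a_3|\le 3/4$ and $|a_4|\le 19/36$. Therefore
\[
|a_3^2-a_4^2|\le \frac{9}{16}+\frac{361}{1296}=\frac{729}{1296}+\frac{361}{1296}=\frac{1090}{1296}=\frac{545}{648},
\]
which is exactly the claimed bound. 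I do not need to re-derive anything: I will take the coefficient bounds from that earlier theorem, written in terms of $p_1,p_2,p_3$ via \eqref{a3} and \eqref{a4}.

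For sharpness, the two terms must add in modulus, so $a_3^2$ and $a_4^2$ must have opposite signs while $|a_3|=3/4$ and $|a_4|=19/36$. The function $f_3$ of \eqref{1.477}, obtained by the rotation $z\mapsto iz$ of $f_1$, has $a_3=-3/4$ and $a_4=-\tfrac{19}{36}i$. Hence $a_3^2=9/16$ and $a_4^2=-361/1296$, so
\[
a_3^2-a_4^2=\frac{9}{16}+\frac{361}{1296}=\frac{545}{648}.
\]
This function lies in $\mathcal{S}^{\ast}_{B}$ because $B(iz)\prec B(z)$, since $z\mapsto iz$ is a Schwarz function.

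The only step needing care is this sign check in the sharpness argument. Using $f_1$ itself would give $a_3^2-a_4^2=9/16-361/1296$, which is strictly smaller than the bound. So it has to be verified from the expansion \eqref{1.477} that $f_3$ produces a real $a_3^2$ and a negative real $a_4^2$. Since $a_n(f_3)=i^{\,n-1}a_n(f_1)$, we get $a_4^2=i^6a_4(f_1)^2=-a_4(f_1)^2$ and $a_3^2=i^4a_3(f_1)^2=a_3(f_1)^2$, which confirms it. Beyond this check the argument is purely arithmetic.
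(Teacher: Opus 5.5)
Your argument is correct, and it takes a genuinely different route from the paper's.

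The paper never uses the coefficient bounds. It expands $a_3^2-a_4^2$ in the Schwarz coefficients via \eqref{a2a3a4 in terms of w}, takes absolute values term by term, and inserts the estimates of Lemma~\ref{lem:5}. It then asserts that the resulting majorant $M(x,y)$ on $\{0\le y\le 1-x^2\}$ has no interior critical point and is at most $1090$ on the boundary.

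Since $a_3^2$ and $a_4^2$ share no monomials in $b_1,b_2,b_3$ and all their coefficients are positive, that majorant is really an estimate of $|a_3|^2+|a_4|^2$. So structurally it is your decomposition, but with each piece bounded in Schwarz coordinates. That is lossy for $a_4$: termwise it only gives $|a_4|\le\tfrac79\sqrt{14/23}\approx 0.607$, not $\tfrac{19}{36}$.

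In fact, on the edge $y=1-x^2$ one has, up to the paper's slip of $342$ for $324$,
$M(x,1-x^2)=1296\bigl[(\tfrac12+\tfrac14x^2)^2+(\tfrac76x-\tfrac{23}{36}x^3)^2\bigr]$.
Its derivative at $x=1$ is $-54$, and numerically $M\approx 1090.6$ at $x=0.97$. So the paper's boundary claim $M(x,1-x^2)\le 1090$ is false, and its route by itself only yields a bound slightly above $545/648$.

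Your argument avoids this because it uses the Carath\'eodory-form estimates $|p_1^2+4p_2|\le 12$ and $|p_1^3+12p_1p_2+48p_3|\le 8+48+96=152$. These are sharp for $|a_3|$ and $|a_4|$ individually and are attained simultaneously by $f_1$, which is what makes the crude inequality $|a_3^2-a_4^2|\le|a_3|^2+|a_4|^2$ good enough. Your sharpness check via $a_n(f_3)=i^{n-1}a_n(f_1)$ is exactly right.

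I would only suggest writing those two Carath\'eodory estimates out explicitly instead of citing the earlier theorem, so the argument is self-contained.
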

\begin{proof}
Let $f\in \mathcal{S}^{\ast}_{B}$. Then, there exists Schwarz function $w(z)$ satisfying \eqref{schwarzfunc}. By solving and comparing the coefficients of $f(z)$ and $w(z)$  from \eqref{eq:2.3}, we obtain
\begin{equation}
    a_2=b_1,\quad a_3=\dfrac{3b_1^2}{4}+\dfrac{b_2}{2},\quad a_4= \dfrac{19b_1^3}{36}+\dfrac{5b_1b_2}{6}+\dfrac{b_3}{3}.\label{a2a3a4 in terms of w}
\end{equation}
    Substituting the expressions from \eqref{a2a3a4 in terms of w} into \eqref{t22} for $\mathcal{T}_{2,3}(f)$ and simplifying, we get
    \begin{eqnarray*}
    T_{2,3}(f)&=&a_3^2-a_4^2\\
    &=&-\dfrac{1}{1296}\big(361b_1^6-729b_1^4+1140b_1^4b_2-972b_1^2b_2-342b_2^2\\
    & &  \qquad+900b_1^2b_2^2+456b_1^3b_3+720b_1b_2b_3+144b_3^2\big)
    \end{eqnarray*}
    Applying triangle inequality and Lemma \ref{lem:5}, we obtain
    \begin{eqnarray*}
        |T_{2,3}(f)|&\leq&\dfrac{1}{1296}\big[361|b_1|^6+729|b_1|^4+1140|b_1|^4|b_2|+972|b_1|^2|b_2|+342|b_2|^2\\
    & &  \qquad+900|b_1|^2|b_2|^2+456|b_1|^3|b_3|+720|b_1||b_2||b_3|+144|b_3|^2\Big]\\
    &\leq&\dfrac{1}{1296}\Big[361|b_1|^6+729|b_1|^4+1140|b_1|^4(1-|b_1|^2)+972|b_1|^2(1-|b_1|^2)\\
    & &  \qquad+342(1-|b_1|^2)^2+900|b_1|^2(1-|b_1|^2)^2+456|b_1|^3\left(1-|b_1|^2-\dfrac{|b_2|^2}{1+|b_1|}\right)\\
    & & \qquad+720|b_1|(1-|b_1|^2)\left(1-|b_1|^2-\dfrac{|b_2|^2}{1+|b_1|}\right)+144\left(1-|b_1|^2-\dfrac{|b_2|^2}{1+|b_1|}\right)^2\Big]
    \end{eqnarray*}
    Setting $|b_1|:=x$ and $|b_2|:=y$ then we get
    \begin{eqnarray}
        |T_{2,3}(f)|&\leq&\dfrac{1}{1296}\Big[361x^6+729x^4+1140x^4(1-x^2)+972x^2(1-x^2)+342(1-x^2)^2\nonumber\\ 
    & &  \qquad+900x^2y^2+456x^3\left(1-x^2-\tfrac{y^2}{1+x}\right)\nonumber\\
    & & \qquad+720xy\left(1-x^2-\tfrac{y^2}{1+x}\right)+144\left(1-x^2-\tfrac{y^2}{1+x}\right)^2\Big]\nonumber\\
    &\leq&\dfrac{1}{1296}M(x,y), \label{Mxy}
    \end{eqnarray}
    where 
    \begin{eqnarray*}
        M(x,y)&=&\Big[361x^6+729x^4+1140x^4(1-x^2)+972x^2(1-x^2)+342(1-x^2)^2\\
    & &  \qquad+900x^2y^2+456x^3\left(1-x^2-\tfrac{y^2}{1+x}\right)\\
    & & \qquad+720xy\left(1-x^2-\tfrac{y^2}{1+x}\right)+144\left(1-x^2-\tfrac{y^2}{1+x}\right)^2\Big]
    \end{eqnarray*}
    In view of Lemma \ref{lem:5}, we have $\phi=\{(x,y):0\leq y\leq 1-x^2,0\leq x\leq1\}$, our aim is to establish the maximum value of $M(x,y)$ in the region $\phi$. Therefore, the critical point of $M(x,y)$ satisfies the conditions
    \[
        \dfrac{\partial M}{\partial x}=0 \quad \text{and} \quad\dfrac{\partial M}{\partial y}=0
    \]
    It can be observed that there are no solutions of $M(x,y)$ inside the interior of $\phi$, hence maximum of $M(x,y)$ must occur aon the boundary of $\phi$.
    \[
        M(x,0)=361x^6+729x^4-456x^5+456x^3+144(1-x^2)^2\leq1090; \quad 0\leq x\leq1,
    \]
    \[
    M(0,y)=144y^4+54y^2+144\leq342; \quad 0\leq y\leq 1.
    \]
    and \[
    M(x,1-x^2)\leq1090; \quad 0\leq x\leq1.
    \]
    Hence, we get $M(x,y)\leq1090$, by substituting maximum value of $M(x,y)$ in equation \eqref{Mxy}, we obtain
    \[
    |T_{2,3}(f)|\leq\dfrac{1090}{1296}=\dfrac{545}{648}\approx0.8410493827.
    \]
    This bound is sharp, and equality is achieved for the function $f_3(z)\in\mathcal{S}^{\ast}_{B}$, defined in \eqref{1.477}.
 \end{proof}

We can evaluate the Toeplitz determinant for various coefficient sets, by altering the coefficients. For example, by substituting equation \eqref{log coeff} into \eqref{1.12}, we can obtain the determinant corresponding to the logarithmic coefficients.
\begin{equation} \label{4.1}
    \mathcal{T}_{2,1}(F_{f}/2) = \begin{vmatrix}
\gamma_1 & \gamma_{2} \\
\gamma_{2} & \gamma_{1}
\end{vmatrix} = \gamma_1^2 - \gamma_2^2 = \frac{1}{16}\left(4a_2^2 - a_2^4 - 4a_3^2 + 4a_2^2a_3\right).
\end{equation}
We now proceed to establish the sharp bounds of the second-order Toeplitz determinant associated with the logarithmic coefficients for $f \in \mathcal{S}^{\ast}_{B}$:
\begin{theorem}
Let $f \in \mathcal{S}^{\ast}_{B}$. Then
\begin{equation}
   |\mathcal{T}_{2,1}(F_{f}/2)| \leq \frac{17}{64}.\nonumber
\end{equation} 
This inequality is sharp.
\end{theorem}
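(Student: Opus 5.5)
The plan is to bound $|\gamma_1^2-\gamma_2^2|$ by the triangle inequality, $|\gamma_1|^2+|\gamma_2|^2$. The crude bounds $|\gamma_1|\le 1/2$, $|\gamma_2|\le 1/4$ from the preceding theorem are not enough, since they give $5/16$. So I will bound $|\gamma_2|$ in terms of $|p_1|$, so that the two terms cannot be extremal simultaneously.

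Note that $\mathcal{T}_{2,1}(F_f/2)$ is not homogeneous under rotation: $\gamma_1^2$ and $\gamma_2^2$ pick up the different factors $e^{2i\theta}$ and $e^{4i\theta}$. So I will not normalize $p_1\ge 0$ and will work with complex $p_1$ throughout.

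From the proof of the theorem on logarithmic coefficients,
\[
\gamma_1=\tfrac14 p_1,\qquad \gamma_2=\tfrac1{32}(4p_2-p_1^2).
\]
I rewrite $4p_2-p_1^2=4\left(p_2-\tfrac12 p_1^2\right)+p_1^2$. The third inequality of Lemma~\ref{lem:3} then gives
\[
|4p_2-p_1^2|\le 4\left(2-\tfrac{|p_1|^2}{2}\right)+|p_1|^2=8-|p_1|^2 .
\]
Writing $|p_1|=2t$ with $t\in[0,1]$, this yields $|\gamma_1|=t/2$ and $|\gamma_2|\le (2-t^2)/8$. Hence
\[
|\mathcal{T}_{2,1}(F_f/2)|\le \frac{t^2}{4}+\frac{(2-t^2)^2}{64}=:g(s),\qquad s=t^2\in[0,1].
\]
The function $g(s)=s/4+(2-s)^2/64$ is convex in $s$, so its maximum on $[0,1]$ is attained at an endpoint. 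Since $g(0)=1/16$ and $g(1)=17/64$, the bound $17/64$ follows.

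For sharpness I expect $f_1$ not to work: it has $\gamma_1=1/2$ and $\gamma_2=1/8$, which gives $15/64$ because of cancellation. Instead I will use $f_3$ from \eqref{1.477}, which has $a_2=i$ and $a_3=-3/4$. Then $\gamma_1=i/2$ and $\gamma_2=\tfrac12(-3/4+1/2)=-1/8$, so $\gamma_1^2-\gamma_2^2=-1/4-1/64=-17/64$, with modulus $17/64$.

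The main obstacle is realizing that rotation cannot be used to make $p_1$ real, and that the naive coefficient bounds are not sharp. Once $|\gamma_2|$ is tied to $|p_1|$ through Lemma~\ref{lem:3}, what remains is a one-variable convexity check.
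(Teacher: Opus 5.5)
Your proof is correct and is essentially the paper's argument. The paper works with the Schwarz coefficients $b_1=\tfrac12 p_1$ and $b_2=\tfrac12\bigl(p_2-\tfrac12 p_1^2\bigr)$, so its use of $|b_2|\le 1-|b_1|^2$ is exactly your inequality $|p_2-\tfrac12 p_1^2|\le 2-\tfrac12|p_1|^2$; both routes reach the same bound $\tfrac1{64}(s^2+12s+4)$ with $s=|b_1|^2$, and both get sharpness from $f_3$.
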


\begin{proof}
Let $f\in \mathcal{S}^{\ast}_{B}$. Substituting the expressions from \eqref{a2a3a4 in terms of w} into \eqref{log coeff} and simplifying, we get
\begin{eqnarray}
    (\gamma_1^2-\gamma_2^2)&=&-\dfrac{1}{16}\left(a_2^4-4a_2^2+4a_3^2-4a_2^2a_3\right)\nonumber\\
    &=&-\dfrac{1}{64}\left(b_1^4+4b_1^2b_2-16b_1^2+4b_2^2\right) \label{98}
\end{eqnarray}
Applying Lemma \ref{lem:5} to \eqref{98}, we obtain
\begin{eqnarray*}
|\gamma_1^2-\gamma_2^2|&\leq&\dfrac{1}{64}(|b_1|^4+4|b_1|^2|b_2|+16|b_1|^2+4|b_2|^2)\\
&\leq&\dfrac{1}{64}\left(|b_1|^4+4|b_1|^2(1-|b_1|^2)+16|b_1|^2+4(1-|b_1|^2)^2\right)\\
    &\leq&\dfrac{1}{64}\left(|b_1|^4+12|b_1|^2+4\right)
\end{eqnarray*}
Setting $\zeta:=|b_1|$, we have
\[|\gamma_1^2-\gamma_2^2|\leq\dfrac{1}{64}\left(\zeta^4+12\zeta^2+4\right)\]
Since $\zeta \in [0,1]$,
\[|\mathcal{T}_{2,1}(F_{f}/2)| \leq \frac{17}{64}\]
This bound is sharp, and equality is achieved for the function $f_3(z)\in\mathcal{S}^{\ast}_{B}$, defined in \eqref{1.477}. 
\end{proof}
We now evaluate the Toeplitz determinant corresponding to the logarithmic coefficients of inverse functions, by applying equation \eqref{loginvcoef123} in \eqref{1.14}.
\begin{equation} \label{4.2}
    \mathcal{T}_{2,1}(F_{f^{-1}}/2) = \begin{vmatrix}
\Gamma_1 & \Gamma_{2} \\
\Gamma_{2} & \Gamma_{1}
\end{vmatrix} = \Gamma_1^2 - \Gamma_2^2 = -\frac{1}{16}\left(9a_2^4 - 4a_2^2 + 4a_3^2 - 12a_2^2a_3\right).
\end{equation}
Next, we establish the sharp bounds for the second-order Toeplitz determinant corresponding to the logarithmic coefficients of inverse  $f \in \mathcal{S}^{\ast}_{B}$:
\begin{theorem}
Let $f \in \mathcal{S}^{\ast}_{B}$, then
\begin{equation}
    |\mathcal{T}_{2,1}(F_{f^{-1}}/2)| \leq \frac{25}{64}. \nonumber
\end{equation} 
This inequality is sharp.
\end{theorem}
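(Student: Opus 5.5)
The plan mirrors the proof of the preceding theorem for $\mathcal{T}_{2,1}(F_f/2)$. I would work with the Schwarz function $w(z)=\sum_{n\ge1} b_n z^n$ from \eqref{eq:2.3} instead of the Carath\'eodory representation. The reason is that Lemma~\ref{lem:5} then gives the bound $|b_2|\le 1-|b_1|^2$, which couples $|b_1|$ and $|b_2|$ and is exactly what makes the final one-variable estimate sharp.

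First, substitute $a_2=b_1$ and $a_3=\tfrac34 b_1^2+\tfrac12 b_2$ from \eqref{a2a3a4 in terms of w} into \eqref{loginvcoef123}. This gives
\[
\Gamma_1=-\tfrac12 b_1,\qquad \Gamma_2=-\tfrac12\Big(a_3-\tfrac32 a_2^2\Big)=\tfrac38 b_1^2-\tfrac14 b_2 .
\]
Hence, consistently with \eqref{4.2},
\[
\mathcal{T}_{2,1}(F_{f^{-1}}/2)=\Gamma_1^2-\Gamma_2^2=\tfrac{1}{64}\left(16 b_1^2-9b_1^4+12 b_1^2 b_2-4b_2^2\right).
\]

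Next, apply the triangle inequality and then Lemma~\ref{lem:5} in the form $|b_2|\le 1-|b_1|^2$. Every term enters with a nonnegative coefficient after taking moduli, so $|b_2|$ may be replaced by its upper bound. Writing $x:=|b_1|\in[0,1]$, this yields
\[
|\Gamma_1^2-\Gamma_2^2|\le \tfrac1{64}\Big(16x^2+9x^4+12x^2(1-x^2)+4(1-x^2)^2\Big)=\tfrac1{64}\left(x^4+20x^2+4\right).
\]
The right-hand side is increasing on $[0,1]$, so its maximum is $25/64$, attained at $x=1$. This proves the stated bound.

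For sharpness, I would take $f_3$ from \eqref{1.477}. It corresponds to $w(z)=iz$, so $b_1=i$ and $b_2=0$. Then $b_1^2=-1$ and the expression above equals $\tfrac1{64}(-16-9)=-\tfrac{25}{64}$, giving equality in modulus. By contrast, $f_1$ only produces $7/64$, so the rotated extremal $f_3$ is genuinely needed.

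The step needing most care is the algebra for $\Gamma_2$ in terms of $b_1,b_2$, together with checking that the triangle-inequality bound is attained. Attainment holds because at $b_1=i$, $b_2=0$ the two surviving terms $16b_1^2$ and $-9b_1^4$ are both real and negative, so they add in modulus; this is what makes the estimate sharp rather than lossy.
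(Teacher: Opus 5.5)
Your proposal is correct and follows essentially the same route as the paper: substitute $a_2=b_1$, $a_3=\tfrac34 b_1^2+\tfrac12 b_2$ to get $\mathcal{T}_{2,1}(F_{f^{-1}}/2)=\tfrac{1}{64}(16b_1^2-9b_1^4+12b_1^2b_2-4b_2^2)$. Then apply the triangle inequality with $|b_2|\le 1-|b_1|^2$ from Lemma~\ref{lem:5} to reach $\tfrac{1}{64}(x^4+20x^2+4)\le\tfrac{25}{64}$, and use $f_3$ (i.e.\ $w(z)=iz$) for sharpness; your explicit check that $b_1=i$, $b_2=0$ gives exactly $-\tfrac{25}{64}$ makes precise a step the paper leaves implicit.
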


\begin{proof} Let $f \in \mathcal{S}^{\ast}_{B}$. By substituting the expressions from \eqref{a2a3a4 in terms of w} into \eqref{4.2}, we obtain
\begin{eqnarray} 
    \mathcal{T}_{2,1}(F_{f^{-1}}/2)&=&\Gamma_1^2-\Gamma_2^2\nonumber\\
    &=& -\frac{1}{16}\left(9a_2^4-4a_2^2+4a_3^2-12a_2^2a_3\right)\nonumber\\
    &=&-\frac{1}{64}\left(9b_1^4+4b_2^2-16b_1^2-12b_1^2b_2\right)\label{4.4}
\end{eqnarray}
Applying Lemma~\ref{lem:5} to \eqref{4.4}, we obtain
\begin{eqnarray}
    |\mathcal{T}_{2,1}(F_{f^{-1}}/2)|&\le& \dfrac{1}{64} \left(9 |b_1^4| +4|b_2|^2+16|b_1|^2+12|b_1|^2|b_2|\right)\nonumber\\
    &\leq&\dfrac{1}{64}\left(9|b_1|^4+4(1-|b_1|^2)^2+16|b_1|^2+12|b_1|^2(1-|b_1|^2)\right)\nonumber\\
    &\le&\dfrac{1}{64}\left(|b_1|^4+20|b_1|^2+4\right). \nonumber
    \end{eqnarray}

Setting $\zeta:=|b_1|$, we have
\[
    |\mathcal{T}_{2,1}(F_{f^{-1}}/2)| \le \frac{1}{64}\left(\zeta^4+20\zeta^2+4\right),\nonumber
\]
Since $\zeta \in [0,1]$,
\[
    |\mathcal{T}_{2,1}(F_{f^{-1}}/2)| \le \frac{25}{64}.
\]
This bound is sharp, and equality is achieved for the function $f_3(z)\in\mathcal{S}^{\ast}_{B}$, defined in \eqref{1.477}.
\end{proof}

\section{Conclusion}
The determination of coefficient bounds for analytic and univalent functions remains a central topic in geometric function theory, with significant implications in image reconstruction, signal analysis, complex dynamics, and chaos modeling. Motivated by these applications, the present study introduces a new class of starlike functions associated with a balloon-shaped domain. For this class, sharp initial coefficient estimates, Fekete-Szegö type inequalities, and exact bounds for second-order Hankel and Toeplitz determinants-including logarithmic and inverse-logarithmic variants-are derived. These results highlight the deep connection between geometric structure and analytic behavior, offering a unified approach for extremal problems and potential extensions to higher-order determinants and related subclasses of univalent functions.

\end{document}